\newtheorem{definition}{Definition}[section]
\newtheorem{theorem}[definition]{Theorem}
\newtheorem{lemma}[definition]{Lemma}
\newtheorem{proposition}[definition]{Proposition}
\theoremstyle{definition}
\newtheorem{remark}[definition]{Remark}
\newtheorem{example}[definition]{Example}
\numberwithin{equation}{section}
\DeclareMathOperator{\diam}{diam}
\begin{document}

\title{\bf Metric spaces with unique pretangent spaces}

\author{{\bf Oleksiy Dovgoshey:$(\boxtimes)$} \\\smallskip\\  Institute of Applied Mathematics and
Mechanics of NASU,\\ R.~Luxemburg str. 74, Donetsk 83114, Ukraine
\\ {\it aleksdov@mail.ru}\\\bigskip\\
{\bf Fahreddin Abdullayev} and {\bf Mehmet K\"{u}\c{c}\"{u}kaslan:}\\\smallskip\\
Mersin University Faculty of Literature and Science,\\ Department
of Mathematics,\\ 33342 Mersin, Turkey\\{\it fabdul@mersin.edu.tr}
and {\it mkucukaslan@mersin.edu.tr}}

\date{}

\maketitle
\begin{abstract}
We find necessary and sufficient conditions under which an
arbitrary metric space $X$ has a unique pretangent space at the
marked point $a\in X$.\\\\
{\bf Key words:} Metric spaces; Tangent spaces to metric spaces;
Uniqueness of tangent metric spaces; Tangent space to the Cantor set.\\\\
{\bf 2000 Mathematic Subject Classification:} 54E35
\end{abstract}

\section{Introduction}

Analysis on metric spaces with no a priory smooth structure has
rapidly developed the present time. This development is closely
related to some generalizations of the differentiability.
Important examples of such generalizations and even an axiomatics
of so-called ``pseudo-gradients'' can be found in
\cite{Am,AmKi1,AmKi2,Ch,Ha,He,HeKo,Sh} and respectively in
\cite{Am1}. In almost all above-mentioned books and papers the
generalized differentiations involve an induced linear structure
that makes possible to use the classical differentiations in the
linear normed spaces. A new {\it intrinsic} approach to the
introduction of the ``smooth'' structure for general metric spaces
was proposed  by O.~Martio and by the first author of the present
paper in \cite{DM}.

A basic technical tool in \cite{DM} is the notion of  pretangent
spaces at a point $a$ of an arbitrary metric space $X$ which were
defined as  factor spaces of  families of sequences of points
$x_n\in X$  convergent to $a$. In  present paper we find and prove
necessary and sufficient conditions under which the metric space
with a marked point $a$ has a unique pretangent space at $a$ for
every normalizing sequence $\tilde r$, see Definition \ref{1:d1.3}
below.

For convenience we recall the main notions form \cite{DM}, see
also \cite{Dov}.

Let $(X,d)$ be a metric space and let $a$ be point of $X$. Fix a
sequence $\tilde r$ of positive real numbers $r_n$ which tend to
zero. In what follows this sequence $\tilde r$ be called a
\textit{normalizing sequence}. Let us denote by  ${\tilde X}$ the
set of all sequences of points from $X$.

\begin{definition}
\label{1:d1.1} Two sequences $\tilde{x},\tilde{y}\in \tilde X$,
$\tilde x=\{ x_n\}_{n\in \mathbb{N}}$ and $\tilde y=\{ y_n\}_{n\in
\mathbb{N}}$,  are mutually stable (with respect to a normalizing
sequence $\tilde r=\{ r_n\}_{n\in \mathbb{N}}$) if there is a
finite limit
\begin{equation}  \label{1:eq1.1}
\lim_{n\to\infty} \frac{d(x_n, y_n)}{r_n} := \tilde d_{\tilde
r}(\tilde x, \tilde y)=\tilde d(\tilde x, \tilde y).
\end{equation}
\end{definition}

We shall say that a family $\tilde F\subseteq \tilde X$ is
\textit{self-stable} (w.r.t.  $\tilde r$) if every two $\tilde
x,\tilde y \in \tilde F$ are mutually stable. A family $\tilde F
\subseteq\tilde X$ is \emph{maximal self-stable} if $\tilde{F}$ is
self-stable and for an arbitrary $\tilde z\in \tilde X$ either
$\tilde z\in \tilde F$ or there is $\tilde x\in \tilde F$ such
that $\tilde x$ and $\tilde z$ are not mutually stable.

A standard application of Zorn's Lemma leads to the following

\begin{proposition}
\label{1:p1.2} Let $(X, d)$ be a metric space and let $a\in X$.
Then for every normalizing sequence $\tilde r=\{ r_n\}_{n\in
\mathbb{N}}$ there exists a maximal self-stable family $\tilde
X_a=\tilde X_{a, \tilde r}$ such that $\tilde a:=\{ a, a,
\dots\}\in \tilde X_a$.
\end{proposition}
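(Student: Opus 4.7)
The plan is to run Zorn's Lemma on the poset of self-stable families containing the constant sequence $\tilde a$, ordered by inclusion.

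First I would let $\mathcal{P}$ denote the collection of all self-stable families $\tilde F \subseteq \tilde X$ (with respect to $\tilde r$) such that $\tilde a \in \tilde F$, and partially order $\mathcal{P}$ by set inclusion. To see $\mathcal{P}$ is nonempty, note that $\{\tilde a\}$ is trivially self-stable since $d(a,a)/r_n = 0$ for every $n$, so $\tilde d(\tilde a, \tilde a) = 0$ exists.

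Next I would verify the chain condition. Let $\{\tilde F_\alpha\}_{\alpha \in A}$ be a totally ordered chain in $\mathcal{P}$ and set $\tilde F := \bigcup_{\alpha \in A} \tilde F_\alpha$. Clearly $\tilde a \in \tilde F$. To see $\tilde F$ is self-stable, pick any $\tilde x, \tilde y \in \tilde F$; then $\tilde x \in \tilde F_{\alpha_1}$ and $\tilde y \in \tilde F_{\alpha_2}$ for some indices, and by the chain property one of $\tilde F_{\alpha_1}, \tilde F_{\alpha_2}$ contains the other, so both sequences lie in a common self-stable $\tilde F_{\alpha}$. Hence the limit in \eqref{1:eq1.1} exists, showing $\tilde x$ and $\tilde y$ are mutually stable. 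Thus $\tilde F \in \mathcal{P}$ is an upper bound for the chain.

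By Zorn's Lemma, $\mathcal{P}$ has a maximal element $\tilde X_a$, which is self-stable and contains $\tilde a$. It remains to confirm that $\tilde X_a$ is \emph{maximal self-stable} in the sense of the definition given above: if $\tilde z \in \tilde X \setminus \tilde X_a$, then $\tilde X_a \cup \{\tilde z\}$ strictly contains $\tilde X_a$ and still contains $\tilde a$, so by maximality in $\mathcal{P}$ it cannot be self-stable. Since $\tilde X_a$ itself is self-stable, the failure must come from some pair involving $\tilde z$, i.e.\ there is $\tilde x \in \tilde X_a$ not mutually stable with $\tilde z$. This is precisely the required property, so $\tilde X_a = \tilde X_{a,\tilde r}$ works. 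There is no genuine obstacle here; the only point requiring attention is the chain verification, which works because total ordering lets any finite subset sit inside one member of the chain.
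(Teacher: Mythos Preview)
Your argument is correct and is exactly the ``standard application of Zorn's Lemma'' that the paper invokes without spelling out; the paper gives no further details beyond that phrase. Your write-up simply fills in the routine verification of the chain condition and the passage from $\mathcal{P}$-maximality to maximal self-stability, and there is nothing to add.
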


Note that the condition $\tilde a \in \tilde X_a$ implies the
equality
\begin{equation*}
\lim_{n\to\infty} d(x_n, a)=0
\end{equation*}
for every $\tilde x=\{ x_n\}_{n\in \mathbb{N}}$ which belongs to
$\tilde X_a$.

Consider a function $\tilde d : \tilde X_a\times \tilde X_a \to
\mathbb{R}$ where $\tilde d(\tilde x, \tilde y)=\tilde d_{\tilde
r}(\tilde x, \tilde y)$ is defined by \eqref{1:eq1.1}. Obviously,
$\tilde d$ is symmetric and nonnegative. Moreover, the triangle
inequality for the original metric $d$ implies
\begin{equation*}
\tilde d (\tilde x, \tilde y) \leq \tilde d(\tilde x, \tilde
z)+\tilde d(\tilde z, \tilde y)
\end{equation*}
for all $\tilde x, \tilde y, \tilde z$ from $\tilde X_a$. Hence
$(\tilde X_a, \tilde d)$ is a pseudometric space.

\begin{definition}
\label{1:d1.3} The pretangent space to the space $X$ at the point
$a$ w.r.t. a normalizing sequence $\tilde r$ is the metric
identification of the pseudometric space $(\tilde X_{a, \tilde r},
\tilde d)$.
\end{definition}

Since the notion of pretangent space is basic for the present
paper, we remaind this metric identification construction.

Define a relation $\sim$ on $\tilde X_a$ by $\tilde x \sim \tilde
y$ if and only if $\tilde d(\tilde x, \tilde y)=0$. Then $\sim $
is an equivalence relation. Let us denote by
$\Omega_{a}=\Omega_{a, \tilde r}=\Omega_{a,\tilde r}^X$ the set of
equivalence classes in $\tilde X_a$ under the equivalence relation
$\sim$. It follows from general properties of pseudometric spaces,
see, for example, \cite[Chapter 4, Th.~15]{Kell}, that if $\rho$
is defined on $\Omega_a$ by
\begin{equation}  \label{1:eq1.2}
\rho(\alpha, \beta) :=\tilde d(\tilde x, \tilde y)
\end{equation}
for $\tilde x\in \alpha$ and $\tilde y \in \beta$, then $\rho$ is
the well-defined metric on $\Omega_a$. The metric identification
of $(\tilde X_a, \tilde d)$ is, by definition, the metric space
$(\Omega_a, \rho)$.

Remark that $\Omega_{a, \tilde r} \neq\emptyset$  because the
constant sequence $\tilde a$ belongs to $\tilde X_{a,\tilde r}$,
see Proposition \ref{1:p1.2}.

Let $\{n_k\}_{k\in \mathbb{N}}$ be an infinite, strictly
increasing sequence of natural numbers. Let us denote by $\tilde
r^{\prime}$ the subsequence $\{ r_{n_k}\}_{k\in\mathbb{N}}$ of the
normalizing sequence $\tilde r=\{ r_n\}_{n\in \mathbb{N}}$ and let
$\tilde x^{\prime}:=\{ x_{n_k}\}_{k\in \mathbb{N}}$ for every
$\tilde x=\{x_n\}_{n\in \mathbb{N}}\in\tilde{X}$.  It is clear
that if $\tilde x$ and $\tilde y $ are mutually stable w.r.t.
$\tilde r$, then $\tilde x^{\prime}$ and $\tilde y^{\prime}$ are
mutually stable w.r.t. $\tilde r^{\prime}$ and
\begin{equation}  \label{1:eq1.3}
\tilde d_{\tilde r} (\tilde x, \tilde y)=\tilde d_{\tilde
r^{\prime}}( \tilde x^{\prime},\tilde y^{\prime}).
\end{equation}
If $\tilde X_{a, \tilde r}$ is a maximal self-stable (w.r.t.
$\tilde r$) family, then, by Zorn's Lemma, there exists a maximal
self-stable (w.r.t. $\tilde r^{\prime}$) family $\tilde X_{a,
\tilde r^{\prime}}$ such that
\begin{equation*}
\{ \tilde x^{\prime}:\tilde x \in \tilde X_{a, \tilde
r}\}\subseteq \tilde X_{a, \tilde r^{\prime}}.
\end{equation*}
Denote by $\mathrm{in}_{\tilde r^{\prime}}$ the mapping from
$\tilde X_{a, \tilde r}$ to $\tilde X_{a, \tilde r^{\prime}}$ with
$\mathrm{in}_{\tilde r^{\prime}}(\tilde x)=\tilde x^{\prime}$ for
all $\tilde x\in \tilde X_{a, \tilde r}$. If follows from
\eqref{1:eq1.2} that after the metric identifications
$\mathrm{in}_{\tilde r^{\prime}}$ pass to an isometric embedding
$\mathrm{em}^{\prime}$: $\Omega_{a, \tilde r}\to \Omega_{a, \tilde
r^{\prime}}$ under which the diagram
\begin{equation}  \label{1:eq1.4}
\begin{array}{ccc}
\tilde X_{a, \tilde r} & \xrightarrow{\ \ \mbox{in}_{\tilde r'}\ \
} &
\tilde X_{a, \tilde r^{\prime}} \\
\!\! \!\! \!\! \!\! \! p\Bigg\downarrow &  & \! \!\Bigg\downarrow
p^{\prime}
\\
\Omega_{a, \tilde r} & \xrightarrow{\ \ \mbox{em}'\ \ \ } &
\Omega_{a, \tilde r^{\prime}}
\end{array}
\end{equation}
is commutative. Here $p$, $p^{\prime}$ are metric identification
mappings, $p(\tilde x):=\{\tilde y\in \tilde X_{a, \tilde
r}:\tilde d_{\tilde r}(\tilde x,\tilde y)=0\}$ and
$p^{\prime}(\tilde x):=\{ \tilde y\in\tilde X_{a,\tilde
r^{\prime}}:\tilde d_{\tilde r^{\prime}}(\tilde x,\tilde y)=0\}$.

Let $X$ and $Y$ be two metric spaces. Recall that a map $f:X\to Y$
is called an \textit{isometry} if $f$ is distance-preserving and
onto.

\begin{definition}
\label{1:d1.4} A pretangent $\Omega_{a, \tilde r}$ is tangent if
$\mathnormal{em}^{\prime}$: $\Omega_{a, \tilde r}\to\Omega_{a,
\tilde r^{\prime}}$ is an isometry for every $\tilde r^{\prime}$.
\end{definition}

Simple arguments give the following proposition.

\begin{proposition}
\label{1:p1.5} Let $X$ be a metric space with a marked point $a$,
$ \tilde{r}$ a normalizing sequence and $\tilde{X}_{a,\tilde{r}}$
a maximal self-stable family with correspondent pretangent space
$\Omega _{a,\tilde{r}}$. The following statements are equivalent.

(i) $\Omega_{a,\tilde{r}}$ is tangent.

(ii) For every subsequence $\tilde{r}^{\prime }$ of the sequence $
\tilde{r}$ the family $\{ \tilde{x}^{\prime }:\tilde{x}\in
\tilde{X}_{a,\tilde{r}}\}$ is maximal self-stable w.r.t.
$\tilde{r}^{\prime}$.

(iii) A function $em^{\prime }:\Omega_{a,\tilde{r}}\longrightarrow
\Omega _{a,\tilde{r}^{\prime }}$ is surjective for every
$\tilde{r} ^{\prime}$.

(iv) A function $in_r':\,\tilde{X}_{a,\tilde r} \longrightarrow
\tilde{X}_{a,\tilde{r}^{\prime}}$ is surjective for every
$\tilde{r}'$.
\end{proposition}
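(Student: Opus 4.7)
My plan is to establish the cycle of implications (i) $\Rightarrow$ (iii) $\Rightarrow$ (iv) $\Rightarrow$ (ii) $\Rightarrow$ (i). The implication (i) $\Rightarrow$ (iii) is immediate, since any isometry is onto. Similarly, (iv) $\Rightarrow$ (ii) is immediate: if $\mathrm{in}_{\tilde r'}$ is surjective then $\{\tilde x' : \tilde x \in \tilde X_{a,\tilde r}\}$ equals the maximal self-stable family $\tilde X_{a,\tilde r'}$, and so is itself maximal self-stable w.r.t.\ $\tilde r'$.

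For (ii) $\Rightarrow$ (i), I would first upgrade the hypothesis to surjectivity of $\mathrm{in}_{\tilde r'}$. Writing $A := \{\tilde x' : \tilde x \in \tilde X_{a,\tilde r}\}$, one has $A \subseteq \tilde X_{a,\tilde r'}$ by construction. If there were $\tilde z \in \tilde X_{a,\tilde r'} \setminus A$, maximality of $A$ would produce $\tilde x \in A \subseteq \tilde X_{a,\tilde r'}$ with $\tilde x,\tilde z$ not mutually stable w.r.t.\ $\tilde r'$, contradicting self-stability of $\tilde X_{a,\tilde r'}$. Hence $A = \tilde X_{a,\tilde r'}$, i.e., $\mathrm{in}_{\tilde r'}$ is surjective. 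Commutativity of the diagram \eqref{1:eq1.4} together with the surjectivity of $p$ then shows that $\mathrm{em}'$ is surjective; since $\mathrm{em}'$ is already an isometric embedding, it is an isometry, which is (i).

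The key implication is (iii) $\Rightarrow$ (iv), which I would handle by a splicing construction. Given $\tilde z = \{z_k\}_{k\in\mathbb N} \in \tilde X_{a,\tilde r'}$, use (iii) to choose $\tilde y \in \tilde X_{a,\tilde r}$ with $\mathrm{em}'(p(\tilde y)) = p'(\tilde z)$, so that $\tilde d_{\tilde r'}(\tilde y',\tilde z) = 0$. Define $\tilde x = \{x_n\}_{n\in\mathbb N}$ by $x_{n_k} := z_k$ for each $k$ and $x_n := y_n$ for all remaining $n$; then $\tilde x' = \tilde z$ by construction. To conclude via maximality that $\tilde x \in \tilde X_{a,\tilde r}$, it suffices to verify mutual stability of $\tilde x$ with every $\tilde w \in \tilde X_{a,\tilde r}$. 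Along the indices $n_k$ the ratio $d(x_{n_k},w_{n_k})/r_{n_k}$ tends to $\tilde d_{\tilde r'}(\tilde z,\tilde w')$, while along the remaining indices the ratio equals $d(y_n,w_n)/r_n$ and has limit $\tilde d_{\tilde r}(\tilde y,\tilde w)$. By \eqref{1:eq1.3} the latter equals $\tilde d_{\tilde r'}(\tilde y',\tilde w')$, and the triangle inequality applied with $\tilde d_{\tilde r'}(\tilde y',\tilde z) = 0$ forces $\tilde d_{\tilde r'}(\tilde z,\tilde w') = \tilde d_{\tilde r'}(\tilde y',\tilde w')$, so the two subsequential limits agree and the full limit exists.

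The main obstacle is precisely this splicing step: equivalence-class surjectivity of $\mathrm{em}'$ must be upgraded to sequence-level surjectivity of $\mathrm{in}_{\tilde r'}$, which requires exhibiting an explicit lift and verifying its compatibility with every member of the maximal family using a two-subsequence limit argument. All other implications are either tautological (``a surjective isometric embedding is an isometry'') or follow from commutativity of \eqref{1:eq1.4} together with the maximality principle for self-stable families.
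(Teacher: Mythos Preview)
Your argument is correct. The paper itself does not supply a proof of this proposition; it simply writes ``For the proof see \cite[Proposition 1.2]{Dov} or \cite[Proposition 1.5]{DAK1}.'' Your cycle (i) $\Rightarrow$ (iii) $\Rightarrow$ (iv) $\Rightarrow$ (ii) $\Rightarrow$ (i) is sound, and the splicing construction for (iii) $\Rightarrow$ (iv) is exactly the right idea: lifting $\tilde z$ to $\tilde x$ by inserting $z_k$ at the indices $n_k$ and filling in with $y_n$ elsewhere, then checking mutual stability against every $\tilde w\in\tilde X_{a,\tilde r}$ via the two complementary subsequences. One minor point worth making explicit is the degenerate case where the complement of $\{n_k\}$ in $\mathbb N$ is finite (i.e., $\{n_k\}$ is cofinite); then there is no ``remaining'' subsequence to worry about and the limit is determined by the $n_k$ alone. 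Otherwise the argument is complete as written.
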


For the proof see \cite[Proposition 1.2]{Dov} or \cite[Proposition
1.5]{DAK1}.

\section{Conditions of uniqueness of pretangent spaces}

In this section we start from the simplest example of a metric
space with unique pretangent spaces.
\begin{example}
\label{4:e2.1} Let $X=\mathbb{R}^{+}=\left[ 0,\infty \right[$ be
the set of all non-negative, real numbers with the usual metric
\begin{equation*}
d(x,y)=\left\vert x-y\right\vert,
\end{equation*}
let $\tilde{r}=\left\{ r_{n}\right\}_{n\in\mathbb{N}}$  be an
arbitrary normalizing sequence and let $0$ be the marked point of
$X$. Consider a maximal self-stable family
$\tilde{X}_{0,\tilde{r}}$.
\end{example}

\begin{proposition}
\label{4:p2.2} The following statements are true.

(i) Let $\tilde{x}=\left\{x_{n}\right\}_{n\in\mathbb{N}}\in
\tilde{X}$. Then $\tilde{x}\in \tilde{X}_{0,\tilde{r}}$ if and
only if there is $c\geq 0$ such that
\begin{equation}
\underset{n\rightarrow \infty}{\lim }\frac{x_{n}}{r_{n}}=c.
\label{4:eq1.5}
\end{equation}

(ii) For every two
$\tilde{x}=\left\{x_{n}\right\}_{n\in\mathbb{N}},\tilde{y}=\left\{
y_{n}\right\}_{n\in\mathbb{N}}$ from $\tilde{X}_{0,\tilde{r}}$ the
equality
\begin{equation*}
\tilde{d}_{\tilde{r}}(\tilde{x},\tilde{y})=0
\end{equation*}
holds if and only if
\begin{equation*}
\underset{n\rightarrow \infty }{\lim
}\frac{x_{n}}{r_{n}}=\underset{ n\rightarrow\infty}{\lim
}\frac{y_{n}}{r_{n}}.
\end{equation*}

(iii) The pretangent space $\Omega_{0,\tilde{r}}$ corresponding to
$ \tilde{X}_{0,\tilde{r}}$ is isometric to
$(\mathbb{R}^{+},\left\vert .,.\right\vert)$.

(iv) The pretangent space $\Omega_{0,\tilde{r}}$ is tangent.
\end{proposition}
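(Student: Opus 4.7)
The plan is to work through the four parts in order, since each builds on the previous, and extract as much mileage as possible from the single identity $\tilde d_{\tilde r}(\tilde x,\tilde y)=\lim_{n\to\infty} |x_n-y_n|/r_n$ together with the fact that $\tilde 0\in \tilde X_{0,\tilde r}$.

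For part (i), the ``only if'' direction is essentially immediate: since $\tilde x$ must be mutually stable with $\tilde 0\in\tilde X_{0,\tilde r}$, the limit
\[
\lim_{n\to\infty}\frac{|x_n-0|}{r_n}=\lim_{n\to\infty}\frac{x_n}{r_n}
\]
exists and is $\geq 0$. For the ``if'' direction I would use maximality of $\tilde X_{0,\tilde r}$: given any $\tilde y\in\tilde X_{0,\tilde r}$, applying the just-proved direction yields $\lim y_n/r_n=c'\geq 0$, and then $|x_n-y_n|/r_n\to|c-c'|$, so $\tilde x$ is mutually stable with every element of $\tilde X_{0,\tilde r}$. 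Maximality forces $\tilde x\in\tilde X_{0,\tilde r}$. Part (ii) is then a one-line consequence of the same computation $\tilde d_{\tilde r}(\tilde x,\tilde y)=|c-c'|$.

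For part (iii) I would define $f\colon\Omega_{0,\tilde r}\to\mathbb{R}^{+}$ by $f([\tilde x]):=\lim x_n/r_n$. Well-definedness and injectivity are exactly (ii); the identity $\rho([\tilde x],[\tilde y])=|c-c'|$ shows $f$ is distance-preserving. Surjectivity is the cleanest part: given $c\geq 0$, the sequence $x_n:=c\,r_n$ belongs to $\tilde X_{0,\tilde r}$ by part (i) and satisfies $f([\tilde x])=c$.

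Part (iv) is where the real work lies and is the step I expect to be the main obstacle, since Definition \ref{1:d1.4} quantifies over all subsequences $\tilde r'$. I would invoke Proposition \ref{1:p1.5} and verify condition (ii) there, namely that $\{\tilde x'\,:\,\tilde x\in\tilde X_{0,\tilde r}\}$ is maximal self-stable w.r.t. $\tilde r'=(r_{n_k})_{k\in\mathbb{N}}$. Self-stability of the image family is automatic from \eqref{1:eq1.3}. For maximality, take any $\tilde z=(z_k)_{k\in\mathbb{N}}$ mutually stable w.r.t. $\tilde r'$ with every element of the image family; stability with $\tilde 0'=\tilde 0$ gives $c:=\lim_{k\to\infty} z_k/r_{n_k}\in\mathbb{R}^{+}$. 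The key construction is then to extend $\tilde z$ back to a full sequence by setting
\[
x_n:=\begin{cases} z_k & \text{if } n=n_k,\\ c\,r_n & \text{otherwise,}\end{cases}
\]
and to observe that $\lim_{n\to\infty} x_n/r_n=c$, whence $\tilde x\in\tilde X_{0,\tilde r}$ by part (i) and $\tilde x'=\tilde z$ by construction. This shows the image family exhausts every $\tilde r'$-stable candidate, giving maximality; the tangent property then follows from Proposition \ref{1:p1.5}. The subtlety to watch is that the two subsequences $(x_n/r_n)_{n\in\{n_k\}}$ and $(x_n/r_n)_{n\notin\{n_k\}}$ must share the same limit $c$ — which is precisely why the value $c$ from $\tilde z$ must be used for the filler indices.
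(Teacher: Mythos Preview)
Your proposal is correct and follows essentially the same approach as the paper's proof: the same limit computation $|x_n-y_n|/r_n\to|c-c'|$ drives parts (i)--(iii), and for part (iv) both you and the paper extend a sequence $\tilde z\in\tilde X_{0,\tilde r'}$ to a full sequence by filling in $c\,r_n$ at the missing indices and then invoke Proposition~\ref{1:p1.5}. If anything, your write-up is slightly more explicit than the paper's (you spell out the maximality step in (i) and the surjectivity of $f$ in (iii), which the paper leaves implicit).
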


\begin{proof}
(i) If $\tilde{x}=\left\{ x_{n}\right\} _{n\in\mathbb{N}}\in
\tilde{X}_{0,\tilde r}$, then there is a finite limit
\begin{equation*}
\underset{n\rightarrow \infty }{\lim }\frac{\left\vert
x_{n}-0\right\vert}{r_{n}}=\tilde d(\tilde{x},\tilde{0}).
\end{equation*}%
Since we have $x_{n}=$ $\left\vert x_{n}-0\right\vert $ for all
$n\in\mathbb{N}$, the limit relation \eqref{4:eq1.5} holds with
$c=\tilde d(\tilde{x},\tilde{0})$. Suppose that $\tilde x,\tilde
y\in\tilde X$, $\tilde{x}=\left\{ x_{n}\right\}
_{n\in\mathbb{N}},\ \tilde{y}=\left\{ y_{n}\right\}
_{n\in\mathbb{N}}$ and there are $c_{1},c_{2}\in\mathbb{R}^{+}$
such that
\begin{equation*}
\underset{n\rightarrow \infty }{\lim
}\frac{x_{n}}{r_{n}}=c_{1},~\underset{n\rightarrow \infty}{\lim
}\frac{y_{n}}{r_{n}}=c_{2}.
\end{equation*}
It implies that
\begin{equation}
\underset{n\rightarrow \infty }{\lim }\frac{\left\vert
x_{n}-y_{n}\right\vert}{r_{n}}=\left\vert c_{1}-c_{2}\right\vert,
\label{4:eq1.6}
\end{equation}
so $\tilde{x}~\text{and}~\tilde{y}$ are mutually stable. It
implies Statement (i).

(ii) Statement (ii) follows from Statement (i) and
\eqref{4:eq1.6}.

(iii) Define a function $f:\Omega
_{0,\tilde{r}}\rightarrow\mathbb{R}^{+}$ by the rule: If $\beta
\in \Omega_{0,\tilde{r}}$ and $\tilde{x}\in \beta$, then write
$f(\beta ):=\underset{n\rightarrow \infty }{\lim}
\frac{x_{n}}{r_{n}}$. Statements (i),(ii) and limit relation
\eqref{4:eq1.6} imply that $f$ is a well-defined isometry.

(iv) Let $\tilde{n}=\left\{n_{k}\right\}_{k\in\mathbb{N}}$ be a
strictly increasing, infinite sequence of natural numbers and let
$ \tilde{r}^{\prime }=\left\{ r_{n_k}\right\}_{k\in\mathbb{N}}$ be
the corresponding subsequence of the normalizing sequence
$\tilde{r}$. If  $\tilde{x} =\left\{ x_{k}\right\}
_{k\in\mathbb{N}}\in \tilde{X}_{0,\tilde{r}'}$ then, by Statement
(i), there is $b\in\mathbb{R}^{+}$ such that
\begin{equation*}
\underset{k\rightarrow \infty }{\lim }\frac{x_{k}}{r_{n_{k}}}=b.
\end{equation*}%
Define  $~\tilde{y}=\left\{y_{n}\right\}_{n\in\mathbb{N}}\in
\tilde{X}$ \ by the rule
\begin{equation*}
y_{n}:=
\begin{cases}
x_{k}& \text{if there is an element }n_{k}\text{ of the sequence }\tilde{n}\text{ such that }n_{k}=n, \\
br_{n}& \text{otherwise}.
\end{cases}
\end{equation*}
It is  clear that $\tilde y^{\prime}=\left\{ y_{n_{k}}\right\}
_{k\in\mathbb{N}}=\tilde{x}$ and
\begin{equation*}
\underset{n\rightarrow \infty }{\lim }\frac{y_{n}}{r_{n}}=b.
\end{equation*}%
Hence, by Statement (i),  $\tilde{y}$ belongs to
$\tilde{X}_{0,\tilde{r}}$. Using Proposition \ref{1:p1.5} we see
that $\Omega _{0,\tilde{r}}$ is tangent.
\end{proof}

Statement (i) of Proposition \ref{4:p2.2} shows that the space
$(\mathbb{R}^{+},\left\vert .,.\right\vert)$ possesses an
interesting property: For every normalizing sequence $\tilde{r}$
there exists a unique pretangent space $\Omega_{0,\tilde r}$. The
main theorem of this paper describes metric spaces which have this
property.
\begin{remark}\label{r:2.3}
The uniqueness in the previous paragraph and in  Theorem
\ref{5:t2.5} below is understood in the usual set-theoretical
sense. Statement (i) of Proposition \ref{4:p2.2} implies that for
$X=\mathbb R^+$ the family (= the set) $\tilde X_{a,\tilde r}$ is
unique. Hence $\Omega_{0,\tilde r}$, the metric identification of
$\tilde X_{0,\tilde r}$, is also unique. Since
$$
\tilde X_{0,\tilde r}=\cup\{\tilde
x\in\beta:\beta\in\Omega_{0,\tilde r}\},
$$
i.e., the set $\tilde X_{0,\tilde r}$ is the union of all
equivalence classes $\beta\in\Omega_{0,\tilde r}$, the uniqueness
of the pretangent spaces $\Omega_{0,\tilde r}$ gives the
uniqueness of $\tilde X_{0,\tilde r}$.
\end{remark}
Let $(X,d)$ be a metric space with marked point $a$. For each pair
of nonvoid sets $C,D\subseteq X$ write
\begin{equation*}
\Delta (C,D):=\sup \left\{ d(x,y):x\in C,y\in D\right\}, \quad
\delta (C,D):=\inf \left\{ d(x,y):x\in C,y\in D\right\}
\end{equation*}%
and write
\begin{equation*}
A_{a}(r,k):=\left\{ x\in X:\frac{r}{k}\leq d(x,a)\leq rk\right\} ,
\quad S_{a}(r):=\left\{ x\in X:d(x,a)= r\right\}
\end{equation*}%
and for every $r>0$ and every $k\geq 1$ define
\begin{equation*}
R_{a,X}:=\left\{ r\in\mathbb{R}^{+}:S_{a}(r)\neq \emptyset
\right\}
\end{equation*}%
and for every $\varepsilon \in \left] 0,1\right[ $%
\begin{equation*}
R_{\varepsilon }^{2}:=\left\{ (r,t)\in R^{2}_{a,X}:r\neq 0\neq
t\text{ and }\left\vert \frac{r}{t}-1\right\vert \geq \varepsilon
\right\}
\end{equation*}%
where $R^{2}_{a,X}$ is the Cartesian product of $R_{a,X}$'s. See
Fig.~1.

\suppressfloats[t]

$\phantom{a}$
\begin{figure}[h!]\centering
\includegraphics[width=12cm,keepaspectratio]{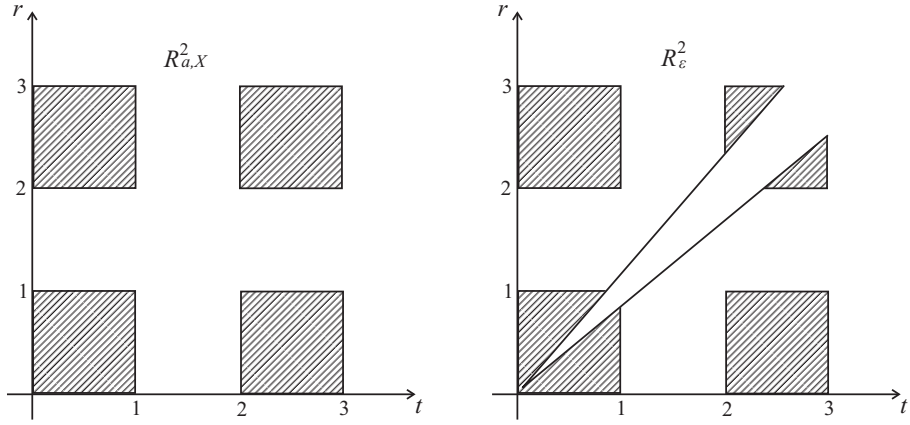}
\caption{The sets $R^2_{a,X}$ and $R^2_\varepsilon$ with
$R_{a,X}=[0,1]\cup[2,3]$ and $\varepsilon=\frac16$. Nontengential
limit \eqref{5:eq2.9} is taken over the set $R^2_\varepsilon$.}
\end{figure}

\begin{theorem}
\label{5:t2.5} Let $(X,d)$ be a metric space and let $a$ be a
limit point of $X$. Then for every normalizing sequence $\tilde r$
there is a unique pretangent space $\Omega_{a,\tilde r}$ if and
only if the following three conditions are satisfied
simultaneously.

(i) The limit relation
\begin{equation}\label{5:eq2.8}
\underset{k\rightarrow 1}{\lim }\underset{r\rightarrow 0}{\lim
\sup }\frac{\diam(A_{a}(r,k))}{r}=0\qquad r\in\,]0,\infty[\,,\
k\in[1,\infty[
\end{equation}%
holds.

(ii)  We have
\begin{equation}\label{5:eq2.9}
\underset{(t,g)\in R_{\varepsilon
}^{2}}{\underset{(t,g)\rightarrow (0,0)}{\lim }}\frac{\Delta
(S_{a}(g),S_{a}(t))}{\delta (S_{a}(g),S_{a}(t))}=1
\end{equation}
for every $\varepsilon \in \left] 0,1\right[ \,$.

(iii) If $\left\{(q_{n},t_{n})\right\}_{n\in\mathbb{N}} $ is a
sequence such that $ (q_{n},t_{n})\in R_{\varepsilon }^{2}$ for
all $n\in\mathbb{N}$ and
\[
\underset{n\rightarrow \infty }{\lim }(q_{n},t_{n})=(0,0)
\]%
and there is
\begin{equation}
\underset{n\rightarrow \infty }{\lim }\frac{q_{n}}{t_{n}}=c_{0}\in
\left[ 0,\infty \right], \label{5:eq2.92}
\end{equation}
then there exists a finite limit
\begin{equation}
\underset{n\rightarrow \infty }{\lim }\frac{\Delta
(S_{a}(q_{n}),S_{a}(t_{n}))}{\left\vert q_{n}-t_{n}\right\vert
}:=\varkappa _{0}.  \label{5:eq2.91}
\end{equation}%
\end{theorem}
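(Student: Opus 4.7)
The plan is to recast uniqueness in a workable form and then treat each of (i), (ii), (iii) in turn.

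\textbf{Reformulation.} Let $S_{\tilde r}:=\{\tilde x\in\tilde X:\lim_{n\to\infty} d(x_n,a)/r_n \text{ exists and is finite}\}$. Since every element of a maximal self-stable family must be mutually stable with $\tilde a$, one has $\tilde X_{a,\tilde r}\subseteq S_{\tilde r}$. If $S_{\tilde r}$ is itself self-stable, maximality forces it to be the unique maximal self-stable family containing $\tilde a$; otherwise Zorn separates a non-mutually-stable pair into two distinct such families. Combined with the general principle of Remark~\ref{r:2.3}, this shows that the theorem is equivalent to the statement: for every normalizing $\tilde r$, $S_{\tilde r}$ is self-stable if and only if (i)--(iii) hold. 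This is the statement I will actually prove.

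\textbf{Sufficiency.} Given $\tilde x,\tilde y\in S_{\tilde r}$ with $c_1:=\lim_{n\to\infty}d(x_n,a)/r_n$ and $c_2:=\lim_{n\to\infty}d(y_n,a)/r_n$, I split on $(c_1,c_2)$. If some $c_i=0$, the triangle inequality $|d(x_n,y_n)-d(y_n,a)|\le d(x_n,a)$ gives $d(x_n,y_n)/r_n\to\max(c_1,c_2)$ directly. If $c_1=c_2>0$, then for each $k>1$ eventually $x_n,y_n\in A_a(c_1 r_n,k)$, so $d(x_n,y_n)/r_n\le\diam(A_a(c_1 r_n,k))/r_n$; sending $n\to\infty$ and then $k\to 1^+$ kills the right-hand side by (i). If $0<c_1\ne c_2$, set $q_n:=d(x_n,a),\,t_n:=d(y_n,a)$; eventually $(q_n,t_n)\in R^2_\varepsilon$ for small $\varepsilon$. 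By (ii), $d(x_n,y_n)/\Delta(S_a(q_n),S_a(t_n))\to 1$ (sandwich by $\delta_n$ and $\Delta_n$), and by (iii), $\Delta(S_a(q_n),S_a(t_n))/|q_n-t_n|\to\varkappa_0$; combined with $|q_n-t_n|/r_n\to|c_1-c_2|$ this yields $d(x_n,y_n)/r_n\to\varkappa_0|c_1-c_2|$.

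\textbf{Necessity.} I assume each condition fails and construct a normalizing $\tilde r$ together with two sequences in $S_{\tilde r}$ witnessing non-self-stability. For (i): choose $k_n\to 1^+$ and $r_n\to 0$ with $\diam(A_a(r_n,k_n))\ge\varepsilon_0 r_n$, pick $x_n,y_n\in A_a(r_n,k_n)$ realizing a constant fraction of this diameter; both $\tilde x,\tilde y$ lie in $S_{\tilde r}$ with $d(\cdot,a)/r_n\to 1$, and the zigzag $z_n:=x_n$ ($n$ even), $z_n:=y_n$ ($n$ odd) also lies in $S_{\tilde r}$ yet $d(z_n,x_n)/r_n$ oscillates between $0$ and the positive subsequential limit of $d(x_n,y_n)/r_n$. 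For (ii): extract $(g_n,t_n)\in R^2_{\varepsilon_0}$ with $\Delta_n/\delta_n\ge 1+\eta_0$ and pick $(x_n^\pm,y_n^\pm)\in S_a(g_n)\times S_a(t_n)$ realizing $\Delta_n$ and $\delta_n$; choosing $r_n\in\{g_n,t_n\}$ so all four sequences lie in $S_{\tilde r}$ and extracting further so every pairwise ratio converges, the triangle inequality $d(x_n^+,y_n^+)\le d(x_n^+,x_n^-)+d(x_n^-,y_n^-)+d(y_n^-,y_n^+)$ forces $\alpha+\beta\ge\eta_0\delta_\infty>0$, where $\alpha,\beta$ are the asymptotic ratios $d(x_n^+,x_n^-)/r_n$ and $d(y_n^+,y_n^-)/r_n$; a zigzag on whichever of $x_n^\pm$ or $y_n^\pm$ is the offender then breaks self-stability. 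For (iii): the sandwich $|q-t|\le\delta\le\Delta\le q+t$ rules out $c_0\in\{0,\infty\}$, so a failure of (iii) must supply $c_0\in(0,\infty)\setminus\{1\}$ and two subsequences along which $\Delta_n/|q_n-t_n|$ tends to distinct finite values $\varkappa_0\ne\varkappa_0'$; interleaving these subsequences in $\tilde r$ and selecting $x_n\in S_a(q_n),y_n\in S_a(t_n)$ near $\Delta_n$ produces $\tilde x,\tilde y\in S_{\tilde r}$ whose distance ratio accumulates at both $\varkappa_0|1-1/c_0|$ and $\varkappa_0'|1-1/c_0|$.

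\textbf{Main obstacle.} The most delicate step is the interleaving for the necessity of (iii): the two subsequences must be spliced into $\tilde r$ in such a way that both $d(x_n,a)/r_n$ and $d(y_n,a)/r_n$ still possess a single limit (keeping $\tilde x,\tilde y$ inside $S_{\tilde r}$) while $d(x_n,y_n)/r_n$ acquires two distinct subsequential limits. The (ii) step demands similar care, with the additional bookkeeping of four sequences and a triangle inequality that must be pointed in just the right direction to extract a strictly positive lower bound on $\alpha+\beta$; by comparison, the sufficiency direction is a routine case analysis once the reformulation is in hand.
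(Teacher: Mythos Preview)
Your proposal is correct and follows the paper's strategy: the reformulation via $S_{\tilde r}$ is exactly Lemma~\ref{5:l2.6}, the sufficiency case-split matches the paper's, and the zigzag/interleaving constructions for the necessity of (i) and (iii) are the same (the paper phrases (iii) directly rather than by contrapositive, but the underlying computation is identical). The one tactical difference is in the necessity of (ii): you pass through the triangle inequality $\Delta_\infty\le\alpha+\delta_\infty+\beta$ to locate a same-sphere ``offender'' and then zigzag between $x_n^+$ and $x_n^-$ (or $y_n^+$ and $y_n^-$), whereas the paper zigzags the \emph{pair} directly---with $x_n^*$ alternating between your $x_n^+,x_n^-$ and $z_n^*$ between your $y_n^+,y_n^-$, uniqueness forces $d(x_n^*,z_n^*)/t_n$ to converge, yet it equals $\Delta_\infty$ along even indices and $\delta_\infty$ along odd ones, giving $s_0=1$ immediately. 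Both routes are sound; the paper's avoids the extra extraction and the triangle-inequality detour.
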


\begin{remark}\label{r:2.5}
 The annulus $A_a(r,k)$ can be void in \ref{5:eq2.8}. At that time
we use the convention
$$
\diam A_r(r,k)=\diam(\emptyset)=0.
$$
\end{remark}

We need the following lemma.

\begin{lemma}
\label{5:l2.6} Let $(X,d)$ be a metric space with a marked point
$a$. A pretangent space $\Omega_{a,\tilde r}$ is unique for every
normalizing sequence $\tilde r$ if and only if the implication
\begin{equation*}
\text((\tilde{x}\text{ and }\tilde{a}\text{ are  mutually
stable})\ \&\ (\tilde{y}\text{ and }\tilde{a}\text{ are mutually
stable}))
\end{equation*}
\begin{equation}
\Longrightarrow (\tilde{x}\text{ and }\tilde{y}\text{ are mutually
stable})  \label{5:eq2.10}
\end{equation}%
is true for every $\tilde{x}$, $\tilde{y}$ $\in \tilde{X}.$
\end{lemma}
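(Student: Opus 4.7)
The plan is to recast uniqueness of the pretangent space as uniqueness of the family $\tilde X_{a,\tilde r}$: as Remark~\ref{r:2.3} explains, the maximal self-stable family is recovered from $\Omega_{a,\tilde r}$ as the union of all its equivalence classes, so uniqueness in the set-theoretic sense holds for $\Omega_{a,\tilde r}$ if and only if the family $\tilde X_{a,\tilde r}$ containing $\tilde a$ is itself unique. Once this reduction is in place, both implications become short Zorn-type arguments about the collection of maximal self-stable subfamilies of $\tilde X$ containing $\tilde a$.

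For the ``if'' direction, I would assume the implication \eqref{5:eq2.10} holds for every $\tilde x,\tilde y\in\tilde X$ and let $\tilde X_{a,\tilde r}^{(1)}$, $\tilde X_{a,\tilde r}^{(2)}$ be two maximal self-stable families each containing $\tilde a$. Fix $\tilde x\in\tilde X_{a,\tilde r}^{(1)}$; then $\tilde x$ and $\tilde a$ are mutually stable, and for every $\tilde y\in\tilde X_{a,\tilde r}^{(2)}$ also $\tilde y$ and $\tilde a$ are mutually stable. Hence \eqref{5:eq2.10} forces $\tilde x$ and $\tilde y$ to be mutually stable for every such $\tilde y$, and maximality of $\tilde X_{a,\tilde r}^{(2)}$ yields $\tilde x\in\tilde X_{a,\tilde r}^{(2)}$. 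Thus $\tilde X_{a,\tilde r}^{(1)}\subseteq\tilde X_{a,\tilde r}^{(2)}$, and the reverse inclusion follows by symmetry.

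For the ``only if'' direction I would argue by contraposition. Suppose there exist $\tilde x,\tilde y\in\tilde X$ that are each mutually stable with $\tilde a$ but not with each other. Then $\{\tilde a,\tilde x\}$ and $\{\tilde a,\tilde y\}$ are each self-stable, so the same Zorn's-lemma argument behind Proposition~\ref{1:p1.2} produces maximal self-stable families $\tilde X_{a,\tilde r}^{(1)}\supseteq\{\tilde a,\tilde x\}$ and $\tilde X_{a,\tilde r}^{(2)}\supseteq\{\tilde a,\tilde y\}$. Since $\tilde x\in\tilde X_{a,\tilde r}^{(1)}$ cannot belong to $\tilde X_{a,\tilde r}^{(2)}$ (it is not mutually stable with $\tilde y\in\tilde X_{a,\tilde r}^{(2)}$), these two families are distinct, so by the reduction above $\Omega_{a,\tilde r}$ is not unique.

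The proof is essentially bookkeeping around the definition of a maximal self-stable family together with a double invocation of Zorn's lemma, and I do not foresee a substantive obstacle. The only delicate point is the set-theoretic reading of uniqueness recorded in Remark~\ref{r:2.3}; without it one might worry that distinct maximal self-stable families could still yield \emph{isometric} pretangent spaces, but the convention adopted in the paper makes the equivalence between uniqueness of $\tilde X_{a,\tilde r}$ and of $\Omega_{a,\tilde r}$ tautological.
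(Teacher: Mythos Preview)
Your proposal is correct and follows essentially the same approach as the paper. The only cosmetic difference is in the ``if'' direction: the paper explicitly identifies the unique maximal self-stable family as $\tilde X_{a,\tilde r}^{m}:=\{\tilde x\in\tilde X:\tilde x\text{ and }\tilde a\text{ are mutually stable}\}$ and shows every maximal self-stable family coincides with it, whereas you compare two arbitrary maximal self-stable families directly; the ``only if'' direction is identical to the paper's.
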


\begin{proof}
Suppose that \eqref{5:eq2.10} is true. Let $\tilde{X}_{a,\tilde{r}%
}^{m}$ be the set of all $\tilde{x}\in \tilde{X}$ \ which are
mutually stable with $\tilde{a}$. It follows from \eqref{5:eq2.10}
that $\tilde X^{m}_{a,\tilde r}$ is self-stable. Consider an
arbitrary maximal self-stable $\tilde{X}_{a,\tilde{r}}$, then, by
 definition of $\tilde{X}_{a,\tilde{r}}$, we obtain the
inclusion $\tilde{X} _{a,\tilde{r}}^{m}\supseteq
\tilde{X}_{a,\tilde{r}}$. Since $ \tilde{X}_{a,\tilde{r}}$ is
maximal self-stable, we have also $
\tilde{X}_{a,\tilde{r}}\supseteq \tilde{X}_{a,\tilde{r}}^{m}$.
Hence the equality
\begin{equation*}
\tilde{X}_{a,\tilde{r}}=\tilde{X}_{a,\tilde{r}}^{m}
\end{equation*}%
holds for all $\tilde{X}_{a,\tilde{r}}$, so all $\tilde{X}_{a,
\tilde{r}}$ coincide.

Now suppose that $\tilde X_{a,\tilde r}$ is unique for every
$\tilde r$ and there are $\tilde{x},\tilde{y}\in \tilde{X}$ and
there is a normalizing sequence $\tilde t$ such that:

$\tilde{x}$ and $\tilde{a}$ are \ mutually stable;

$\tilde{y}$ and $\tilde{a}$ are \ mutually stable;

$\tilde{x}$ and $\tilde{y}$ are not mutually stable. By Zorn's
Lemma there exist maximal self-stable families
$\tilde{X}_{a,\tilde{t}}^{(1)}\supseteq
\left\{\tilde{x},\tilde{a}\right\}$ and
$\tilde{X}_{a,\tilde{t}}^{(2)}\supseteq \left\{ \tilde{y},
\tilde{a}\right\}$. It is clear that $\tilde{X}_{a,\tilde{t}
}^{(1)}\neq $ $\tilde{X}_{a,\tilde{t}}^{(2)}.$ Hence, the
uniqueness of pretangent spaces, see Remark \ref{r:2.5}, implies
\eqref{5:eq2.10}.
\end{proof}

\begin{proof}[\textit{Proof of Theorem \ref{5:t2.5}}]
Assume that $\Omega_{a,\tilde r}$ is unique. We need to verify the
conditions (i)--(iii).

(i) Consider a function $f:[1,\infty[\,\to\mathbb R^+$,
\begin{equation*}
f(k):=k\,\underset{r\rightarrow 0}{\lim \sup
}\frac{\diam(A_{a}(r,k))}{r}.
\end{equation*}%
Since
\begin{equation*}
f(k):=\underset{r\rightarrow 0}{\lim \sup
}\frac{\diam(A_{a}(k\frac rk,k))}{\frac
rk}=\limsup_{t\to0}\frac{\diam(A_a(kt,k))}t
\end{equation*}%
and
$$
A_a(kt,k)=\{x\in X:t\leq d(x,a)\leq k^2t\},
$$
the function $f$ is increasing. Since we have
\begin{equation*}
\frac{\diam(A_{a}(r,k))}{r}\leq \frac{2rk}{r}=2k
\end{equation*}%
for every $k\geq 1$ and all $r>0$, the double inequality
$$
0\leq f(k)\leq2k^2
$$
holds. Consequently
 there is a finite, positive limit $\underset{%
k\rightarrow 1}{\lim }f(k):=c_0$. It is clear that this limit
coincides with the limit in \eqref{5:eq2.8}. Suppose that $c_0>0$.
Let $\varepsilon \in \left] 0,c_{0}\right[ \,$. Then there is
$k_0>1$ such that the double inequality
\begin{equation}
c_{0}-\varepsilon <\limsup_{r\to0}
\frac{\text{diam}(A_{a}(r,k))}{r_{n}}<c_0+\varepsilon
\label{5:eq2.11}
\end{equation}%
holds for all  $k\in \left] 1,k_{0}\right]$. Let $\left\{
k_{n}\right\} _{n\in\mathbb{N}}$ be a strictly decreasing sequence
of real numbers such that all $k_{n}\in\left] 1,k_{0}\right] $ and
\begin{equation}
\underset{n\rightarrow \infty }{\lim }k_{n}=1.  \label{5:eq2.12}
\end{equation}%
Double inequality \eqref{5:eq2.11} implies that there is a
sequence $\tilde r=\{r_n\}_{n\in\mathbb N},\ r_n=r_n(k_n)>0$, such
that $\lim_{n\to0}r_n=0$ and
\begin{equation}
c_{0}-\varepsilon < \frac{\text{diam}(A_{a}(r_{n},k_{n}))}{r_{n}}<
c_0+\varepsilon \label{5:eq2.13}
\end{equation}%
for all $n\in\mathbb{N}$. It follows from \eqref{5:eq2.13} that
there are $\tilde{x}=\left\{ x_{n}\right\} _{n\in\mathbb{N}}$ and
$\tilde{y}=\left\{ y_{n}\right\} _{n\in\mathbb{N}}$ from
$\tilde{X}$ such that
\begin{equation}
x_{n},y_{n}\in A_{a}(r_{n},k_{n})\text{ and
}\frac{d(x_{n},y_{n})}{r_{n}} \geq c_{0}-\varepsilon
\label{5:eq2.14}
\end{equation}%
for all $n\in\mathbb{N}$. The definition of the annulus
$A_{a}(r_{n},k_{n})$ and \eqref{5:eq2.14} imply that
\begin{equation}
\frac{d(x_{n},a)}{r_{n}},\frac{d(y_{n},a)}{r_{n}}\in \left[ \frac{1}{k_{n}}%
,k_{n}\right]  \label{5:eq2.15}
\end{equation}%
for all $n\in\mathbb{N}$. Define a sequence $\tilde{z}=\left\{
z_{n}\right\} _{n\in\mathbb{N}}\in \tilde{X}$ by the rule
\begin{equation}
z_{n}:=\left\{
\begin{array}{ccc}
x_{n} & \text{if} & n\text{ is even} \\
y_{n} & \text{if} & n\text{ is odd. }
\end{array}%
\right.  \label{5:eq2.16}
\end{equation}%
Then it follows from \eqref{5:eq2.12},  \eqref{5:eq2.15} and
\eqref{5:eq2.16} that
\begin{equation*}
\underset{n\rightarrow \infty }{\lim }\frac{d(x_{n},a)}{r_{n}}=\underset{%
n\rightarrow \infty }{\lim }\frac{d(z_{n},a)}{r_{n}}=1.
\end{equation*}%
Moreover \eqref{5:eq2.13} and \eqref{5:eq2.15} imply that
\begin{equation*}
\underset{n\rightarrow \infty }{\lim \inf }\frac{d(x_{n},z_{n})}{r_{n}}=%
\underset{n\rightarrow \infty }{\lim
}\frac{d(x_{2n},z_{2n})}{r_{2n}}=0
\end{equation*}%
but%
\begin{equation*}
\underset{n\rightarrow \infty }{\lim \sup }\frac{d(x_{n},z_{n})}{r_{n}}=%
\underset{n\rightarrow \infty }{\lim \sup }\frac{d(x_{2n+1},z_{2n+1})}{%
r_{2n+1}}\geq c_{0}-\varepsilon >0.
\end{equation*}%
Thus $\tilde{x}$ and $\tilde{a}$ are mutually stable, $\tilde{z}
$ and $\tilde{a}$ are mutually stable but $\tilde{x}$ and $%
\tilde{z}$ are not mutually stable (w.r.t. the normalizing
sequence $\tilde{r}=\left\{ r_{n}\right\} _{n\in\mathbb{N}}$).
Hence, by Lemma \ref{5:l2.6}, pretangent spaces to $X$ at the
point $a$ are  not unique contrary to the assumption.

(ii) Let $\varepsilon\,]0,1[\,$. Since
\begin{equation*}
\Delta (C,D)\geq \delta(C,D)
\end{equation*}%
for all nonvoid sets $C,D\subseteq X,$ we have
\begin{equation*}
1\leq \underset{(t,r)\rightarrow (0,0)}{\lim \inf }\frac{\Delta
(S_{a}(r),S_{a}(t))}{\delta (S_{a}(r),S_{a}(r))}\leq \underset{%
(t,r)\rightarrow (0,0)}{\lim \sup }\frac{\Delta
(S_{a}(r),S_{a}(t))}{\delta (S_{a}(r),S_{a}(t))}:=s_{0}
\end{equation*}
where the upper and lower limits are taken over the set
$R^{2}_\varepsilon$. Hence, it is sufficient to show that
$s_{0}=1$ in the last limit relation. Let $\tilde{r}=\left\{
r_{n}\right\} _{n\in\mathbb{N}}$ and $\tilde{t}=\left\{
t_{n}\right\} _{n\in\mathbb{N}}$ be two sequences of positive real
numbers such that $(r_{n},t_{n})\in R_{\varepsilon }^{2}$ for all
$n\in\mathbb{N}$, and $\underset{n\rightarrow \infty }{\lim
}r_{n}=\underset{n\rightarrow \infty }{\lim }t_{n}=0$ and
\begin{equation}
\frac{\Delta (S_{a}(r_{n}),S_{a}(t_{n}))}{\delta (S_{a}(r_{n}),S_{a}(t_{n}))}%
\rightarrow s_{0}  \label{5:eq2.17}
\end{equation}%
when $n\rightarrow \infty$. Without loss of generality we may suppose that%
\begin{equation*}
0<r_{n}<t_{n}<1
\end{equation*}%
for all $n\in\mathbb{N}$ and there is the limit
\begin{equation}
\underset{n\rightarrow \infty }{\lim }\frac{r_{n}}{t_{n}}:=\gamma
_{0}.  \label{5:eq2.19}
\end{equation}%
First consider the case where $\gamma_{0}=0$. The triangle
inequality implies that
\begin{equation}
r_{n}+t_{n}\geq \Delta (S_{a}(r_{n}),S_{a}(t_{n}))\geq \delta
(S_{a}(r_{n}),S_{a}(t_{n}))\geq t_{n}-r_{n}>0. \label{5:eq2.18}
\end{equation}%
Hence,%
\begin{equation}
\frac{r_{n}+t_{n}}{t_{n}-r_{n}}\geq \frac{\Delta (S_{a}(r_{n}),S_{a}(t_{n}))%
}{\delta (S_{a}(r_{n}),S_{a}(t_{n}))}\geq 1,  \label{5:eq2.20}
\end{equation}%
from this, \eqref{5:eq2.17} and \eqref{5:eq2.18} we obtain%
\begin{equation}
\frac{1+\gamma _{0}}{1-\gamma _{0}}\geq s_{0}\geq 1.
\label{5:eq2.21}
\end{equation}%
Since $\gamma _{0}=0,$ we see that%
\begin{equation*}
s_{0}=1.
\end{equation*}%
Assume now that $0<\gamma _{0}<1.$ (Note that equality $\gamma
_{0}=1$ contradicts the definition of the set $R_{\varepsilon
}^{2}$.) There exist sequences $\tilde{x}=\left\{ x_{n}\right\}
_{n\in\mathbb{N}},\tilde{y}=\left\{ y_{n}\right\}
_{n\in\mathbb{N}}$, $\tilde{z}=\left\{ z_{n}\right\} _{n\in
\mathbb{N} }$ and $\tilde{w}=\left\{ w_{n}\right\} _{n\in
\mathbb{N}
}$ from $\tilde{X}$ which satisfy the following conditions:%
\begin{equation*}
x_{n}\text{ and }y_{n}\text{ belong to }S_{a}(r_{n})\text{ for all
}n\in \mathbb{N} ;
\end{equation*}%
\begin{equation*}
z_{n}\text{ and }w_{n}\text{ belong to }S_{a}(t_{n})\text{ for all
}n\in \mathbb{N} ;
\end{equation*}%
\begin{equation}
\underset{n\rightarrow \infty }{\lim }\frac{\Delta
(S_{a}(r_{n}),S_{a}(t_{n}))}{d(x_{n},z_{n})}=1;  \label{5:eq2.22}
\end{equation}%
\begin{equation}
\underset{n\rightarrow \infty }{\lim }\frac{\delta
(S_{a}(r_{n}),S_{a}(t_{n}))}{d(y_{n},w_{n})}=1.  \label{5:eq2.23}
\end{equation}%
Define new sequences $\tilde{x}^{\ast }=\left\{ x_{n}^{\ast
}\right\} _{n\in \mathbb{N} }$ and $\tilde{z}^{\ast }=\left\{
z_{n}^{\ast }\right\} _{n\in \mathbb{N}
}$ by the rules:%
\begin{equation*}
z_{n}^{\ast }:=\left\{
\begin{array}{ccc}
z_{n} & \text{if} & n\text{ is even,} \\
w_{n} & \text{if} & n\text{ is odd}%
\end{array}%
\right. \,\text{and}\,\text{ \ }x_{n}^{\ast }:=\left\{
\begin{array}{ccc}
x_{n} & \text{if} & n\text{ is even,} \\
y_{n} & \text{if} & n\text{ is odd.}%
\end{array}%
\right.
\end{equation*}%

Relation \eqref{5:eq2.19} and definitions of $\tilde{x},\tilde{y},%
\tilde{z},\tilde{w}$, $\tilde{z}^{\ast }$, $\tilde{x}%
^{\ast}$ imply that%
\begin{equation*}
\underset{n\rightarrow \infty }{\lim }\frac{d(z_{n},a)}{t_{n}}=\underset{%
n\rightarrow \infty }{\lim
}\frac{d(w_{n},a)}{t_{n}}=\underset{n\rightarrow \infty }{\lim
}\frac{d(z_{n}^{\ast },a)}{t_{n}}=1
\end{equation*}%
and%
\begin{equation*}
\underset{n\rightarrow \infty }{\lim }\frac{d(x_{n},a)}{t_{n}}=\underset{%
n\rightarrow \infty }{\lim
}\frac{d(y_{n},a)}{t_{n}}=\underset{n\rightarrow \infty }{\lim
}\frac{d(x_{n}^{\ast },a)}{t_{n}}=\gamma _{0}>0.
\end{equation*}%
Hence each from the sequences $\tilde{x},\tilde{y},\tilde{z},%
\tilde{w},\tilde{x}^{\ast },$ $\tilde{z}^{\ast }$ is mutually
stable with $\tilde{a}.$  Consequently, by Lemma \ref{5:l2.6},
there are $\tilde d_{\tilde t}(\tilde x, \tilde z)$, $\tilde
d_{\tilde t}(\tilde y, \tilde w)$ and $\tilde d_{\tilde t}(\tilde
x^{*}, \tilde z^{*})$. Moreover \eqref{5:eq2.19},
\eqref{5:eq2.18}, \eqref{5:eq2.22} and \eqref{5:eq2.23} imply that
\begin{equation*}
1+\gamma_0\geq\tilde d_{\tilde t}(\tilde x,\tilde z)\geq\tilde
d_{\tilde t}(\tilde y,\tilde w)\geq1-\gamma_0 > 0.
\end{equation*}
It follows from \eqref{5:eq2.22}, \eqref{5:eq2.23}  and \eqref{5:eq2.17} that%
\begin{equation}
0<s_{0}=\underset{n\rightarrow \infty }{\lim }\frac{\Delta
(S_{a}(r_{n}),S_{a}(t_{n}))}{\delta
(S_{a}(r_{n}),S_{a}(t_{n}))}=\underset{n\rightarrow \infty }{\lim
}\frac{d(x_{n},z_{n})}{d(y_{n},w_{n})}=\frac{
\tilde{d}_{\tilde{t}}(\tilde{x},\tilde{z})}{\tilde{d}_{
\tilde{t}}(\tilde{y},\tilde{w})}.  \label{5:eq2.24}
\end{equation}%
Since $\tilde d_{\tilde t}(\tilde y, \tilde w)\neq0$, there is a
finite limit
\begin{equation*}
\underset{n\rightarrow \infty }{\lim }\frac{d(x_{n}^{\ast
},z_{n}^{\ast})}{d(y_{n},w_{n})}.
\end{equation*}%
In particulary it follows from the definitions of $\tilde{x}^{\ast
}$, $\tilde{z}^{\ast}$ that
\begin{equation*}
\underset{n\rightarrow \infty }{\lim }\frac{d(x_{2n+1}^{\ast
},z_{2n+1}^{\ast })}{d(y_{2n+1},w_{2n+1})}=\underset{n\rightarrow
\infty}{\lim }\frac{d(y_{2n+1},w_{2n+1})}{d(y_{2n+1},w_{2n+1})}=1,
\end{equation*}%
moreover, using \eqref{5:eq2.24} we obtain%
\begin{equation*}
\underset{n\rightarrow \infty }{\lim }\frac{d(x_{2n}^{\ast
},z_{2n}^{\ast })}{d(y_{2n},w_{2n})}=\underset{n\rightarrow \infty
}{\lim }\frac{d(x_{2n},z_{2n})}{d(y_{2n},w_{2n})}=s_{0}.
\end{equation*}%
Consequently the equality $s_{0}=1$ holds also for the case where
$0<\gamma _{0}<1$.

(iii) Let $\left\{ (q_{n},t_{n})\right\}_{n\in \mathbb{N}}$ be a
sequences of elements
of $R_{\varepsilon }^{2}$ such that $\underset{n\rightarrow \infty }{\lim }%
(q_{n},t_{n})\break=0$ and \eqref{5:eq2.92} holds. If in
\eqref{5:eq2.92} $c_{0}=0$ or $c_{0}=\infty ,$ then it is clear
that \eqref{5:eq2.91} holds with $\varkappa _{0}=1,$ so it is
sufficient to take
\begin{equation}
0<c_{0}<\infty .  \label{5:eq22.27}
\end{equation}%
Consider the sequence $\tilde{q}=\left\{ q_{n}\right\}
_{n\in\mathbb{N}}$ as a normalizing sequence. Let
$\tilde{x}=\left\{ x_{n}\right\} _{n\in\mathbb{N}}$ and
$\tilde{y}=\left\{ y_{n}\right\} _{n\in\mathbb{N}}$ belong to
$\tilde{X}$ and $d(a,x_{n})=q_{n}~,~d(a,y_{n})=t_{n}$ and
\begin{equation}
\underset{n\rightarrow \infty }{\lim }\frac{d(x_{n},y_{n})}{\Delta
(S_{a}(q_{n}),S_{a}(t_{n}))}=1.  \label{5:eq22.28}
\end{equation}%
Conditions \eqref{5:eq2.92} and \eqref{5:eq22.27} imply that there
is
\begin{equation*}
\tilde{d}_{\tilde{q}}(\tilde{y},\tilde{a})=\underset{n\rightarrow
\infty }{\lim }\frac{d(y_{n},a)}{q_{n}}=\frac{1}{c_{0}}<\infty .
\end{equation*}%
Hence, by Lemma \ref{5:l2.6}, there is a finite limit
\begin{equation*}
\tilde{d}_{\tilde{q}}(\tilde{x},\tilde{y})=\underset{n\rightarrow
\infty }{\lim }\frac{d(x_{n},y_{n})}{q_{n}}.
\end{equation*}%
Moreover, since $(q_{n},t_{n})\in R_{\varepsilon }^{2}$ for all
$n\in\mathbb{N}$, we have $c_{0}\neq 1$. Consequently, using
\eqref{5:eq22.28} and \eqref{5:eq2.92} we obtain
\begin{multline}
\underset{n\rightarrow \infty }{\lim }\frac{\Delta
(S_{a}(q_{n}),S_{a}(t_{n}))}{\left\vert q_{n}-t_{n}\right\vert
}=\underset{n\rightarrow \infty }{\lim }\frac{d(x_{n},y_{n})\Delta
(S_{a}(q_{n}),S_{a}(t_{n}))}{q_{n}\left\vert 1-\frac{t_{n}}{q_{n}}%
\right\vert d(x_{n},y_{n})}=
\\
=\underset{n\rightarrow \infty }{\lim
}\frac{d(x_{n},y_{n})}{q_{n}}\underset{n\rightarrow \infty }{\lim
}\frac{1}{\left\vert 1-\frac{t_{n}}{q_{n}}\right\vert
}=\frac{c_{0}}{\left\vert 1-c_{0}\right\vert
}\tilde{d}_{\tilde{q}}(\tilde{x},\tilde{y}). \label{5:eq22.29}
\end{multline}%

 Suppose that conditions (i)--(iii) are satisfied simultaneously. We
must to prove that $\Omega_{a,\tilde r}$ is unique for every
normalizing sequence $\tilde r$. Let $\tilde{r}=\left\{
r_{n}\right\} _{n\in\mathbb{N}}$ be an arbitrary normalizing
sequence and let $\tilde{x}=\left\{ x_{n}\right\}
_{n\in\mathbb{N}}$ and $\tilde{y}=\left\{ y_{n}\right\}
_{n\in\mathbb{N}}$ be two elements of $\tilde{X}$ such that
\begin{equation*}
0\leq \tilde{d}(\tilde{a},\tilde{y})=\underset{n\rightarrow \infty
}{\lim }\frac{d(a,x_{n})}{r_{n}}<\infty
\end{equation*}%
and%
\begin{equation*}
0\leq \tilde{d}(\tilde{a},\tilde{y})=\underset{n\rightarrow \infty
}{\lim }\frac{d(a,y_{n})}{r_{n}}<\infty .
\end{equation*}%
To prove the uniqueness of $\Omega_{a,\tilde r}$ it is sufficient,
by Lemma \ref{5:l2.6}, to show that $\tilde{x}$ \ and $\tilde{y}$
are mutually stable w.r.t. $\tilde{r}.$ If
$\tilde{d}(\tilde{a},\tilde{x})=0$, then, by the triangle
inequality,
\begin{equation*}
\underset{n\rightarrow \infty }{\lim \sup }\frac{d(x_{n},y_{n})}{r_{n}}\leq ~%
\underset{n\rightarrow \infty }{\lim}(\frac{d(x_{n},a)}{r_{n}}+\frac{%
d(y_{n},a)}{r_{n}})= \tilde{d}(\tilde{a},\tilde{y})
\end{equation*}%
and%
\begin{equation*}
\underset{n\rightarrow \infty }{\lim \inf }\frac{d(x_{n},y_{n})}{r_{n}}\geq ~%
\underset{n\rightarrow \infty }{\lim}(\frac{d(y_{n},a)}{r_{n}}-\frac{%
d(x_{n},a)}{r_{n}})=\tilde{d}(\tilde{a},\tilde{y}).
\end{equation*}%
Consequently, there is a finite limit
\begin{equation*}
\tilde{d}(\tilde{x},\tilde{y})=\underset{n\rightarrow \infty}{\lim
}\frac{d(x_{n},y_{n})}{r_{n}}=d(\tilde{a},\tilde{y}),
\end{equation*}%
i.e.,  $\tilde{x}$ \ and $\tilde{y}$ are mutually stable. The case
where $d(\tilde{a},\tilde{y})=0$ is similar. Hence, without loss
of generality we may assume that
\begin{equation*}
\tilde{d}(\tilde{a},\tilde{y})\neq 0\neq d(\tilde{a},\tilde{x}).
\end{equation*}%
Consider first the case where
\begin{equation*}
\tilde{d}(\tilde{a},\tilde{y})=d(\tilde{a},\tilde{x}):=b\neq 0.
\end{equation*}%
This assumption implies that for every $k>1$ there is
$n_{0}=n_{0}(k)\in\mathbb{N}$ such that the inclusion
\begin{equation}
A_{a}(b{r_{n}},k)\supseteq \left\{ x_{n},y_{n}\right\}
\label{5:eq2.25}
\end{equation}%
holds for all natural $n>n_{0}(k),$ where%
\begin{equation*}
A_{a}(b{r_{n}},k)=\left\{ x\in X:\frac{b{r_{n}}}{k}\leq d(x,a)\leq
kb{r_{n}}\right\}.
\end{equation*}%
It follows from \eqref{5:eq2.25} that%
\begin{equation*}
d(x_{n},y_{n})\leq \text{diam}(A_{a}(b{r_{n}},k))
\end{equation*}%
if $n>n_{0}(k).$ Consequently
\begin{equation*}
\frac{1}{b}\underset{n\rightarrow \infty }{\lim \sup
}\frac{d(x_{n},y_{n})}{r_{n}}\leq ~\underset{n\rightarrow \infty
}{\lim \sup }\frac{\text{diam(}A_{a}(b{r_{n}},k)\text{)}}{br_{n}}.
\end{equation*}%
Letting $k\rightarrow 1$ on the right-hand side of the last
inequality and using \eqref{5:eq2.8} we see that
\begin{equation*}
0\leq \frac{1}{b}\underset{n\rightarrow \infty }{\lim \sup }\frac{%
d(x_{n},y_{n})}{r_{n}}\leq \underset{k\rightarrow \infty }{\lim
}\left(
\underset{n\rightarrow \infty }{\lim \sup }\frac{\text{diam(}%
A_{a}(b{r_{n}},k)\text{)}}{br_{n}}\right)=0.
\end{equation*}%
Hence%
\begin{equation}\label{5:eq2.30}
\tilde d (\tilde x, \tilde y)=\underset{n\rightarrow \infty }{\lim
}\frac{d(x_{n},y_{n})}{r_{n}}=0.
\end{equation}
It implies that $\tilde{x}$  and $\tilde{y}$ are mutually stable.
It still remains to show that there exists a finite limit
\begin{equation*}
\tilde{d}(\tilde{x},\tilde{y})=\underset{n\rightarrow \infty
}{\lim }\frac{d(x_{n},y_{n})}{r_{n}}
\end{equation*}
if
\begin{equation}
0\neq \tilde{d}(\tilde{x},\tilde{a})\neq \tilde{d}(
\tilde{y},\tilde{a})\neq 0.  \label{5:eq2.32}
\end{equation}%
For convenience we write%
\begin{equation*}
q_{n}:=d(x_{n},a)~,~t_{n}:=d(y_{n},a)
\end{equation*}%
for all $n\in\mathbb{N}$. Condition \eqref{5:eq2.32} implies that
there are $\varepsilon >0$ and a natural number
$n_{0}=n_{0}(\varepsilon)$ such that
\begin{equation}
q_{n}\wedge t_{n}>0\text{ and }\left\vert
\frac{q_{n}}{t_{n}}-1\right\vert \geq \varepsilon \label{5:eq2.33}
\end{equation}%
for all $n\geq n_{0}.$ It is clear that
\begin{equation*}
x_{n}\in S_{a}(q_{n})\text{ and }y_{n}\in S_{a}(t_{n})
\end{equation*}%
where $S_{a}(q_{n})$ and $S_{a}(t_{n})$ are the spheres with the
common center $a\in X$ and radiuses $q_{n}$, $t_{n}$ respectively.
Consequently we have the
following inequalities%
\begin{equation}
\Delta (S_{a}(q_{n}),S_{a}(t_{n}))\geq d(x_{n},y_{n})\geq \delta
(S_{a}(q_{n}),S_{a}(t_{n})).  \label{5:eq2.34}
\end{equation}%
Limit relations \eqref{5:eq2.9} and \eqref{5:eq2.91} imply that
\begin{equation*}
\varkappa _{0}=\underset{n\rightarrow \infty }{\lim }\frac{\Delta
(S_{a}(q_{n}),S_{a}(t_{n}))}{\left\vert q_{n}-t_{n}\right\vert
}=\underset{n\rightarrow \infty }{\lim }\frac{\delta
(S_{a}(q_{n}),S_{a}(t_{n}))}{\left\vert q_{n}-t_{n}\right\vert}.
\end{equation*}
Hence, using \eqref{5:eq2.34}, we obtain
\begin{equation*}
\varkappa _{0}=\underset{n\rightarrow \infty }{\lim
}\frac{d(x_{n},y_{n})}{\left\vert q_{n}-t_{n}\right\vert
}=\frac{1}{\left\vert
\tilde{d}(\tilde{x},\tilde{a})-\tilde{d}(\tilde{y},\tilde{a}
)\right\vert }\underset{n\rightarrow \infty }{\lim
}\frac{d(x_{n},y_{n})}{r_{n}}.
\end{equation*}%
Hence%
\begin{equation}
\tilde{d}(\tilde{x},\tilde{y})=\underset{n\rightarrow \infty
}{\lim }\frac{d(x_{n},y_{n})}{r_{n}}=\varkappa _{0}\left\vert
\tilde{d}(\tilde{x},\tilde{a})-\tilde{d}(\tilde{y},\tilde{a})\right\vert
, \label{5:eq2.35}
\end{equation}%
i.e.,  $\tilde{x}$ and $\tilde{y}$ are mutually stable.
\end{proof}
The initial version of Theorem \ref{5:t2.5} was published in
\cite{DAK1}.

The following proposition will be helpful in the future.
\begin{proposition}\label{p:3.8}
Let $(X,d)$ be a metric space with a marked point $a$, let
$Y\subseteq X$ and let $a\in Y$. If pretangent space
$\Omega_{a,\tilde r}^X$ is unique for every normalizing sequence
$\tilde r$, then pretangent space $\Omega_{a,\tilde r}^Y$ is
unique for every $\tilde r$.
\end{proposition}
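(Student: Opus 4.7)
The plan is to apply Lemma \ref{5:l2.6} in both directions, with the only substantive observation being that mutual stability is intrinsic to the metric and therefore preserved under passage to a subspace containing the marked point. By Lemma \ref{5:l2.6} applied to $X$, the hypothesis that $\Omega_{a,\tilde r}^X$ is unique for every normalizing sequence $\tilde r$ is equivalent to the implication \eqref{5:eq2.10} holding for every $\tilde r$ and every pair $\tilde x,\tilde y\in\tilde X$.

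Next I would fix an arbitrary normalizing sequence $\tilde r$ and consider a pair $\tilde x,\tilde y$ of sequences in $Y$ such that each of $\tilde x$ and $\tilde y$ is mutually stable with the constant sequence $\tilde a$ w.r.t. $\tilde r$ in the subspace $(Y,d|_{Y\times Y})$. Since the set of sequences in $Y$ is contained in the set of sequences in $X$ and the metric on $Y$ is the restriction of $d$, the mutual stability conditions --- each involving only values of $d$ of the form $d(x_n,a)$, $d(y_n,a)$ or $d(x_n,y_n)$ --- are identical whether computed in $Y$ or in $X$. Consequently, both pairs $(\tilde x,\tilde a)$ and $(\tilde y,\tilde a)$ are mutually stable w.r.t. $\tilde r$ also when regarded in $X$, with the same limiting values.

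Applying the hypothesis (in its Lemma \ref{5:l2.6} reformulation) to these two sequences viewed in $\tilde X$, we obtain that $\tilde x$ and $\tilde y$ are mutually stable w.r.t. $\tilde r$ in $X$, hence also in $Y$. A second appeal to Lemma \ref{5:l2.6}, now in the sufficiency direction and with $Y$ in place of $X$, yields uniqueness of $\Omega_{a,\tilde r}^Y$ for every $\tilde r$. There is no real obstacle in the argument; the proposition essentially records the fact that the stability condition defining pretangent spaces is inherited by metric subspaces containing the marked point.
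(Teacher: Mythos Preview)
Your proof is correct and follows exactly the approach indicated in the paper, whose proof consists solely of the instruction ``Apply Lemma~\ref{5:l2.6}.'' You have simply unpacked that instruction by noting that mutual stability depends only on the restricted metric and that $\tilde Y\subseteq\tilde X$, so implication~\eqref{5:eq2.10} for $X$ immediately yields it for $Y$.
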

\begin{proof}
Apply Lemma \ref{5:l2.6}.
\end{proof}

\section{Future examples of metric spaces with the unique pretangent spaces}

Using Example \ref{4:e2.1} as the simplest model we can construct
some more interesting from the geometric point of view examples of
metric spaces with unique tangent spaces. To this end we recall
first some facts related to the structure of pretangent spaces to
subspaces of metric spaces.

Let $(X,d)$ be a metric space  with a marked point $a$, let $Y$
and $Z$ be subspaces of $X$ such that $a\in Y\cap Z$ and let
$\tilde r=\{r_n\}_{n\in\mathbb N}$ be a normalizing sequence.
\begin{definition}\label{6:d5.1}
The subspaces $Y$ and $Z$ are {\it tangent equivalent} at the
point $a$ w.r.t. the normalizing sequence $\tilde r$ if for every
$\tilde y_1=\{y_n^{(1)}\}_{n\in\mathbb N}\in\tilde Y$ and every
$\tilde z_1=\{z_n^{(1)}\}_{n\in\mathbb N}\in\tilde Z$ with finite
limits
$$
\tilde d_{\tilde r}(\tilde a, \tilde
y_1)=\lim_{n\to\infty}\frac{d(y_n^{(1)},a)}{r_n}\quad\text{and}\quad
\tilde d_{\tilde r}(\tilde a, \tilde
z_1)=\lim_{n\to\infty}\frac{d(z_n^{(1)},a)}{r_n}
$$
there exist $\tilde y_2=\{y_n^{(2)}\}_{n\in\mathbb N}\in\tilde Y$
and $\tilde z_2=\{z_n^{(2)}\}_{n\in\mathbb N}\in\tilde Z$ such
that
$$
\lim_{n\to\infty}\frac{d(y_n^{(1)},z_n^{(2)})}{r_n}=\lim_{n\to\infty}
\frac{d(y_n^{(2)},z_n^{(1)})}{r_n}=0.
$$
\end{definition}
We shall say that $Y$ and $Z$ are {\it strongly tangent
equivalent} at $a$ if $Y$ and $Z$ are tangent equivalent at $a$
for all normalizing sequences $\tilde r$.

Let  $\tilde F\subseteq\tilde X$. For a normalizing sequence
$\tilde r$ we define a family $[\tilde F]_Y=[\tilde F]_{Y,\tilde
r}$ by the rule
\begin{equation*}\label{6:eq5.1}
(\tilde y\in[\tilde F]_Y)\Leftrightarrow((\tilde y\in\tilde
Y)\&(\exists\,\tilde x\in\tilde F:\tilde d_{\tilde r}(\tilde
x,\tilde y)=0)).
\end{equation*}
The following two lemmas were proved in \cite{Dov}, see also
\cite{DM}.
\begin{lemma}\label{6:p5.2}
Let $Y$ and $Z$ be subspaces of a metric space $X$ and let $\tilde
r$ be a normalizing sequence. Suppose that $Y$ and $Z$ are tangent
equivalent (w.r.t. $\tilde r$) at a point $a\in Y\cap Z$. Then
following statements hold for every maximal self-stable (in
$\tilde Z$) family $\tilde Z_{a,\tilde r}$.
\begin{itemize}
\item[$(i)$] The family $[\tilde Z_{a,\tilde r}]_Y$ is maximal
self-stable (in $\tilde Y$) and we have the equalities
\begin{equation*}\label{6:eq5.2}
[[\tilde Z_{a,\tilde r}]_Y]_Z=\tilde Z_{a,\tilde r}=[\tilde
Z_{a,\tilde r}]_Z.
\end{equation*}

\item[$(ii)$] If $\Omega^Z_{a,\tilde r}$ and $\Omega^Y_{a,\tilde
r}$ are metric identifications of $\tilde Z_{a,\tilde r}$ and,
respectively, of $\tilde Y_{a,\tilde r}:=[\tilde Z_{a,\tilde
r}]_Y$, then the mapping
\begin{equation*}\label{6:eq5.3}
\Omega_{a,\tilde
r}^Z\ni\alpha\longmapsto[\alpha]_Y\in\Omega_{a,\tilde r}^Y
\end{equation*}
is an isometry. Furthermore if $\Omega_{a,\tilde r}^Z$ is tangent,
then $\Omega^Y_{a,\tilde r}$ also is tangent.

\item[$(iii)$] Moreover, if for the normalizing sequence $\tilde
r$ here exists a unique maximal self-stable (in $\tilde Z$) family
$\tilde Z_{a,\tilde r}\ni\tilde a$, then $\tilde Y_{a,\tilde
r}:=[\tilde Z_{a,\tilde r}]_Y$ is a unique maximal self-stable (in
$\tilde Y_{a,\tilde r}$) family which contains $\tilde a$.
\end{itemize}
\end{lemma}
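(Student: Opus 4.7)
The plan is to establish (i), (ii), (iii) in order, using in each step the fact that tangent equivalence w.r.t.\ $\tilde r$ produces, for any sequence in $\tilde Y$ (resp.\ $\tilde Z$) at finite $\tilde d$-distance from $\tilde a$, a partner in $\tilde Z$ (resp.\ $\tilde Y$) at $\tilde d$-distance zero. The underlying mechanism is the triangle-inequality identity at the level of $\tilde d$: whenever $\tilde d(\tilde u,\tilde v)=0$, one may interchange $\tilde u$ and $\tilde v$ in any other $\tilde d$-computation.

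For (i), self-stability of $[\tilde Z_{a,\tilde r}]_Y$ is immediate from that identity. For maximality, I would take $\tilde y\in\tilde Y$ mutually stable with every element of $[\tilde Z_{a,\tilde r}]_Y$, in particular with $\tilde a$, and apply tangent equivalence (with $\tilde z_1:=\tilde a$) to produce $\tilde z_2\in\tilde Z$ with $\tilde d(\tilde y,\tilde z_2)=0$. To place $\tilde z_2\in\tilde Z_{a,\tilde r}$ via maximality of $\tilde Z_{a,\tilde r}$, I would verify mutual stability of $\tilde z_2$ with each $\tilde z\in\tilde Z_{a,\tilde r}$ by invoking tangent equivalence a second time with $\tilde z_1:=\tilde z$, obtaining $\tilde y_2'\in\tilde Y$ with $\tilde d(\tilde y_2',\tilde z)=0$; since $\tilde y_2'\in[\tilde Z_{a,\tilde r}]_Y$, the quantity $\tilde d(\tilde y,\tilde y_2')$ exists by hypothesis, and the chain $\tilde z_2\leftrightarrow\tilde y\leftrightarrow\tilde y_2'\leftrightarrow\tilde z$ closes via triangle inequality. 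The equality $[\tilde Z_{a,\tilde r}]_Z=\tilde Z_{a,\tilde r}$ is a direct consequence of the maximality of $\tilde Z_{a,\tilde r}$, and $[[\tilde Z_{a,\tilde r}]_Y]_Z=\tilde Z_{a,\tilde r}$ follows from that equality together with one further application of tangent equivalence (with $\tilde y_1:=\tilde a$) to obtain the reverse inclusion.

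For (ii), I would define $\alpha\mapsto[\alpha]_Y$ by taking any $\tilde z\in\alpha$, choosing $\tilde y\in\tilde Y_{a,\tilde r}$ with $\tilde d(\tilde y,\tilde z)=0$ (existing by the definition of $[\tilde Z_{a,\tilde r}]_Y$ after (i)), and returning the class of $\tilde y$ in $\Omega^Y_{a,\tilde r}$. Well-definedness, the isometry property, and surjectivity all reduce to the triangle identity $\tilde d(\tilde y,\tilde y')=\tilde d(\tilde z,\tilde z')$ whenever $\tilde d(\tilde y,\tilde z)=\tilde d(\tilde y',\tilde z')=0$. For the tangency-preservation clause I would appeal to Proposition \ref{1:p1.5}: tangency of $\Omega^Z_{a,\tilde r}$ is equivalent to surjectivity of the canonical embedding $\mathrm{em}'$ for every subsequence $\tilde r'$; after checking that tangent equivalence w.r.t.\ $\tilde r$ descends to tangent equivalence w.r.t.\ each $\tilde r'$ (by suitably extending $\tilde r'$-indexed sequences to full $\tilde r$-indexed ones), the isometries supplied by the first half of (ii) applied to $\tilde r$ and to $\tilde r'$ intertwine the two canonical embeddings and transfer surjectivity from the $Z$-side to the $Y$-side.

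For (iii), I would invoke Lemma \ref{5:l2.6}, which for a fixed $\tilde r$ characterises uniqueness by transitivity of mutual stability with $\tilde a$. Given $\tilde y_1,\tilde y_2\in\tilde Y$ both mutually stable with $\tilde a$, pull each back to a partner $\tilde z_i'\in\tilde Z$ at $\tilde d$-distance zero by tangent equivalence; each $\tilde z_i'$ is then mutually stable with $\tilde a$ by triangle inequality, so uniqueness of $\tilde Z_{a,\tilde r}$ together with Lemma \ref{5:l2.6} yields mutual stability of $\tilde z_1',\tilde z_2'$, which transfers back to $\tilde y_1,\tilde y_2$. The main obstacle I anticipate is the tangency-preservation clause in (ii): ensuring that the tangent-equivalence hypothesis descends from $\tilde r$ to an arbitrary subsequence $\tilde r'$ is not purely formal, since restricting a sequence to subsequence indices discards information at the omitted positions, and one must arrange an extension (for instance, inserting $a$ at the missing indices) so that the full sequence remains mutually stable with $\tilde a$ w.r.t.\ $\tilde r$; everything else is bookkeeping around the triangle inequality and the witness-producing property of tangent equivalence.
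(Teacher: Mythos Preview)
The paper does not prove this lemma; it is quoted from \cite{Dov} (see the sentence ``The following two lemmas were proved in \cite{Dov}'' just before the statement). So there is no in-paper proof to compare against, and I can only assess your proposal on its own merits.

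Your arguments for (i), for (iii), and for the isometry half of (ii) are correct: they are exactly the expected triangle-inequality bookkeeping together with two applications of the witness-producing property of tangent equivalence.

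There is, however, a genuine gap in the tangency-preservation clause of (ii). Your plan relies on the assertion that tangent equivalence of $Y$ and $Z$ w.r.t.\ $\tilde r$ descends to tangent equivalence w.r.t.\ every subsequence $\tilde r'$, via an extension obtained by ``inserting $a$ at the missing indices''. That extension does not work: if $\tilde w=\{w_k\}$ satisfies $\lim_k d(w_k,a)/r_{n_k}=c>0$ and you set $y_n=a$ for $n\notin\{n_k\}$, then $d(y_n,a)/r_n$ oscillates between $0$ and values near $c$, so $\tilde y$ is \emph{not} mutually stable with $\tilde a$ w.r.t.\ $\tilde r$ and the tangent-equivalence hypothesis cannot be invoked. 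Worse, the descent claim is simply false in general. Take $X=\mathbb R$, $a=0$, $r_n=2^{-n}$, $Y=\{0\}\cup\{4^{-k}:k\ge1\}$, $Z=\{0\}\cup\{2\cdot4^{-k}:k\ge1\}$. For $\tilde r$ one checks that every $\tilde y\in\tilde Y$ (resp.\ $\tilde z\in\tilde Z$) with finite $\tilde d_{\tilde r}(\tilde a,\cdot)$ actually has $\tilde d_{\tilde r}(\tilde a,\cdot)=0$, so tangent equivalence holds trivially; but for $\tilde r'=\{4^{-k}\}$ the sequence $\tilde y'_1=\{4^{-k}\}\in\tilde Y$ has $\tilde d_{\tilde r'}(\tilde a,\tilde y'_1)=1$ and admits no $\tilde Z$-partner at $\tilde d_{\tilde r'}$-distance zero (the nearest points of $Z$ to $4^{-k}$ are at relative distance $\ge 1/2$). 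In this example $\Omega^Z_{0,\tilde r}$ is a single point and is \emph{not} tangent, so the lemma is not contradicted --- but it shows you cannot route the proof through ``tangent equivalence descends''. A correct argument for this clause must use the tangency of $\Omega^Z_{a,\tilde r}$ directly (for instance via Proposition~\ref{1:p1.5}\,(ii), which gives that $\{\tilde z':\tilde z\in\tilde Z_{a,\tilde r}\}$ is already maximal self-stable in $\tilde Z$ w.r.t.\ each $\tilde r'$) rather than trying to re-establish tangent equivalence at the level of $\tilde r'$.
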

Let $Y$ be a subspace of a metric space $(X,d)$. For $a\in Y$ and
$t>0$ we denote by
$$
S_t^Y=S^Y(a,t):=\{y\in Y:d(a,y)=t\}
$$
the sphere (in the subspace $Y$) with the center $a$ and the
radius $t$. Similarly for $a\in Z\subseteq X$ and $t>0$ define
$$
S_t^Z=S^Z(a,t):=\{z\in Z:d(a,z)=t\}.
$$
Write
\begin{equation*}\label{6:eq5.6}
\varepsilon_a(t,Z,Y):=\sup_{z\in S_t^Z}\inf_{y\in Y}d(z,y)
\end{equation*}
and
\begin{equation*}\label{6:eq5.7}
\varepsilon_a(t)=\varepsilon_a(t,Z,Y)\vee\varepsilon_a(t,Y,Z).
\end{equation*}

\begin{lemma}\label{6:t5.4}
Let $Y$ and $Z$ be subspaces of a metric space $(X,d)$ and let
$a\in Y\cap Z$. Then $Y$ and $Z$ are strongly tangent equivalent
at the point $a$ if and only if the equality
\begin{equation}\label{6:eq5.8}
\lim_{t\to0}\frac{\varepsilon_a(t)}t=0
\end{equation}
holds.
\end{lemma}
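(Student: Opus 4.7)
\textbf{Plan for the proof of Lemma \ref{6:t5.4}.} The proof will be by direct verification of the two implications, unpacking Definition \ref{6:d5.1} and the definition of $\varepsilon_a(t)$ in each direction.

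\textbf{Sufficiency.} Assume $\lim_{t\to 0}\varepsilon_a(t)/t=0$. Fix an arbitrary normalizing sequence $\tilde r=\{r_n\}_{n\in\mathbb N}$ and sequences $\tilde y_1=\{y_n^{(1)}\}\in\tilde Y$, $\tilde z_1=\{z_n^{(1)}\}\in\tilde Z$ satisfying the finite-limit conditions of Definition \ref{6:d5.1}. Write $s_n:=d(y_n^{(1)},a)$ and $u_n:=d(z_n^{(1)},a)$, so $s_n/r_n$ and $u_n/r_n$ are bounded and $s_n,u_n\to 0$. For each $n$ with $s_n>0$ the point $y_n^{(1)}$ lies in the (nonempty) sphere $S_{s_n}^Y$, hence $\inf_{z\in Z}d(y_n^{(1)},z)\le\varepsilon_a(s_n,Y,Z)\le\varepsilon_a(s_n)$, and I can pick $z_n^{(2)}\in Z$ with $d(y_n^{(1)},z_n^{(2)})\le\varepsilon_a(s_n)+s_n/n$ (for $s_n=0$ take $z_n^{(2)}=a$). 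Then
\[
\frac{d(y_n^{(1)},z_n^{(2)})}{r_n}\le\frac{\varepsilon_a(s_n)}{s_n}\cdot\frac{s_n}{r_n}+\frac{s_n}{nr_n}\longrightarrow 0,
\]
since the first factor tends to $0$ and the second one is bounded. An entirely symmetric construction, using the other sup in the definition of $\varepsilon_a$, produces $y_n^{(2)}\in Y$ with $d(y_n^{(2)},z_n^{(1)})/r_n\to 0$. This yields the required $\tilde y_2,\tilde z_2$.

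\textbf{Necessity.} Argue by contraposition. Suppose $\varepsilon_a(t)/t\not\to 0$; then there exist $\delta>0$ and $t_n\to 0^+$ with $\varepsilon_a(t_n)/t_n\ge\delta$. Passing to a subsequence, assume without loss of generality that $\varepsilon_a(t_n,Y,Z)\ge\delta t_n/2$ (the symmetric case is analogous). By definition of the supremum, choose $y_n\in S_{t_n}^Y$ with $\inf_{z\in Z}d(y_n,z)\ge \delta t_n/4$. Take the normalizing sequence $\tilde r:=\{t_n\}$, set $\tilde y_1:=\{y_n\}\in\tilde Y$ and $\tilde z_1:=\tilde a\in\tilde Z$; both satisfy the hypotheses of Definition \ref{6:d5.1} (in fact $\tilde d_{\tilde r}(\tilde a,\tilde y_1)=1$). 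If $Y$ and $Z$ were tangent equivalent w.r.t.\ $\tilde r$, there would exist $\tilde z_2=\{z_n^{(2)}\}\in\tilde Z$ with $d(y_n,z_n^{(2)})/t_n\to 0$; but $d(y_n,z_n^{(2)})\ge\inf_{z\in Z}d(y_n,z)\ge\delta t_n/4$, contradicting this limit. Hence $Y$ and $Z$ are not strongly tangent equivalent at $a$.

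\textbf{Main obstacle.} Nothing deep is required beyond bookkeeping, but two points need care: (a) in the sufficiency part, the ``diagonalization'' choice $d(y_n^{(1)},z_n^{(2)})\le\varepsilon_a(s_n)+s_n/n$ must actually be realizable (which it is, as $\varepsilon_a(s_n,Y,Z)$ is an infimum attained to within any positive slack on $S_{s_n}^Y$), and one must handle the degenerate case $s_n=0$ (or $u_n=0$) separately; (b) in the necessity part, one must legitimately pick $y_n\in S_{t_n}^Y$ almost achieving the sup that defines $\varepsilon_a(t_n,Y,Z)$, which is again a standard sup-approximation argument. These are the only technicalities; the geometric content is simply that the near-$\tilde r$-best approximation between spheres is controlled by $\varepsilon_a$.
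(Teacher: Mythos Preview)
The paper does not actually prove this lemma; it merely states it and cites \cite{Dov} and \cite{DM} for the proof. Your argument is correct and is the natural one: in the sufficiency direction you approximate each $y_n^{(1)}\in S_{s_n}^Y$ by a point of $Z$ to within $\varepsilon_a(s_n)+o(s_n)$ and divide through by $r_n$, and in the necessity direction you manufacture a normalizing sequence $\tilde r=\{t_n\}$ and a sequence $\tilde y_1$ on the spheres $S_{t_n}^Y$ that stays uniformly far (in the rescaled sense) from $Z$, contradicting tangent equivalence. Both directions are handled cleanly, including the degenerate case $s_n=0$ and the need for $S_{t_n}^Y\ne\emptyset$ (which is forced by $\varepsilon_a(t_n,Y,Z)>0$). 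There is nothing to correct.
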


Using  Proposition \ref{4:p2.2}, Lemma \ref{6:p5.2} and Lemma
\ref{6:t5.4} we can easily obtain  examples of  subspaces of the
Euclidean space which have unique tangent spaces. The first
example will be examined in details.
\begin{example}\label{e3.1}
Let $F:[0,1]\to E^n,\ n\geq 2$, be a Jordan curve in the Euclidean
space $E^n$, i.e., $F$ is continuous  and
$$
F(t_1)\ne F(t_2)
$$
for every two distinct points $t_1,t_2\in[0,1]$. We can write $F$
in the  coordinate form
$$
F(t)=(f_1(t),\dots,f_n(t)),\qquad t\in[0,1].
$$
Suppose that all functions $f_i,\ 1\leq i\leq n$, are
differentiable at the point $0$ and
$$
F'(0)=(f_1'(0),\dots,f_n'(0))\ne(0,\dots,0).
$$
(We use the one-sided derivatives here.) We claim that each
pretangent space to the subspace $Y=F([0,1])\subseteq E^n$ at the
point $a=F(0)$ is unique and tangent and isometric to $\mathbb
R^+$ for every normalizing sequence $\tilde r$. Indeed, by Lemma
\ref{6:p5.2} and by Proposition \ref{4:p2.2}, it is sufficient to
show that $Y$ is strongly tangent equivalent to the ray
$$
Z=\{(z_1(t),\dots,z_n(t)):(z_1(t),\dots,z_n(t))=tF'(0)+F(0),\
t\in\mathbb R^+\}
$$
at the point $a=F(0)$.

The classical definition of the differentiability of real
functions shows that limit relation \eqref{6:eq5.8} holds with
these $Y$ and $Z$. Hence, by Lemma~\ref{6:t5.4}, $Y$ and $Z$ are
strongly tangent equivalent at the point $a=F(0)$.
\end{example}

\begin{example}\label{e3.2}
Let $f_i:[0,1]\to\mathbb R,\ i=1,\dots,n$, be functions such that
$f_1(0)=\dots=f_n(0)=c$ where $c\in\mathbb R$ is a constant.
Suppose all $f_i$ have a common finite right derivative $b$ at the
point $0$, $f_1'(0)=\dots=f_n'(0)=b$. Write
$$
a=(0,c)\quad\text{and}\quad
X=\bigcup_{i=1}^n\{(t,f_i(t)):t\in[0,1]\},
$$
i.e., $X$ is an union of the graphs of the functions $f_i$. Let us
consider $X$ as a subspace of the Euclidean plane $E^2$. Then for
every normalizing sequence $\tilde r$ a pretangent space $\tilde
\Omega_{a,\tilde r}$ to the space $X$ at the point $a$ is unique,
tangent and isometric to $\mathbb R^+$.
\end{example}

\begin{example}\label{e3.3}
Let $f_1,f_2$ be two functions from the precedent example. Put
$$
X=\{(x,y):f_1(x)\wedge f_2(x)\leq y\leq f_1(x)\vee f_2(x),\
x\in[0,1]\},
$$
i.e., $X$ is the set of points of the plane which lie between the
graphs of the functions $f_1$ and $f_2$. Then for every
nozmalazing sequence $\tilde r$ each pretangent space $\tilde
\Omega_{a,\tilde r}$ to $X$ at $a=(0,c)$ is unique, tangent and
isometric to $\mathbb R^+$.
\end{example}
\begin{example}\label{e3.4}
Let $\alpha$ be a positive real number. Write
$$
X=\{(x,y,z)\in E^3:\sqrt{y^2+z^2}\leq x^{1+\alpha},\ x\in\mathbb
R^+\},
$$
i.e., $X$ can be obtained by the rotation of the plane figure
$\{(x,y)\in E^2:0\leq y\leq x^{1+\alpha},\ x\in\mathbb R^+\}$
around the real axis.  Then each pretangent space $\tilde
\Omega_{a,\tilde r}$ to $X$ at the point $a=(0,0,0)$ is unique,
tangent and isometric to $\mathbb R^+$.
\end{example}

In the next our example we will describe the tangent space to the
Cantor set $C$ at the point $0$ w.r.t. the normalizing sequence
$\tilde r=\{\frac1{3^n}\}_{n\in\mathbb N}$. We recall the
definition of the Cantor set $C$. Let $x\in[0,1]$ and expand $x$
as
\begin{equation}\label{eq3.2}
x=\sum_{n=1}^\infty\frac{a_{n_x}}{3^n},\quad a_{n_x}\in\{0,1,2\}.
\end{equation}
The Cantor set $C$ is the set of all points from $[0,1]$ which
have expansion \eqref{eq3.2} using only the digits $0$ and $2$.

Define a set $C^e$ as the smallest subset of $\mathbb R$ which
contains the Cantor set $C$ and satisfies the equality
\begin{equation}\label{eq3.3}
C^e=3^nC^e
\end{equation}
for every integer $n\in\mathbb Z$ where
$$
3^nC^e:=\{3^nx:x\in C^e\}.
$$
It follows from \eqref{eq3.2} that a real number $t$ belongs to
$C^e$ if and only if $t$ has a base $3$ expansion with the digits
0 and 2 only, i.e.,
\begin{equation}\label{eq3.4}
t=\sum_{j=-\infty}^Ma_{n_t}3^j.
\end{equation}
with $M\in\mathbb Z$ and $a_{n_t}\in\{0,2\}$.
\begin{proposition}\label{p:3.9}
Let $X=C$ be the Cantor set with the usual metric $|\cdot,\cdot|$
and let $\tilde r=\{3^{-n}\}_{n\in\mathbb N}$. Then pretangent
space $\Omega_{0,\tilde r}^X$ is unique, tangent and isometric to
$(C^e,|\cdot,\cdot|)$.
\end{proposition}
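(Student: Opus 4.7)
The plan is to exploit self-similarity of the Cantor set and the compatibility of $r_n=3^{-n}$ with the triadic structure: apply Lemma~\ref{5:l2.6} for uniqueness and Proposition~\ref{1:p1.5}(iv) for tangency. The key observation is that $d(x_n,0)/r_n = 3^n x_n$, and for $x_n\in C$ one has $3^n x_n\in 3^n C\subseteq C^e$. I would first verify that $C^e$ is closed in $\mathbb{R}$, which follows from the identity $C^e\cap[0,3^k]=3^k C$ (a consequence of the self-similarity $C\cap[0,3^{-k}]=3^{-k}C$) together with closedness of $C$: any convergent sequence in $C^e$ is eventually bounded, hence lies in some closed $3^k C$. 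Therefore, if $\tilde x$ is mutually stable with $\tilde 0$, then $\lim 3^n x_n$ exists and lies in $C^e$.

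Next, I would verify the implication in Lemma~\ref{5:l2.6}. If $\tilde x,\tilde y$ are both mutually stable with $\tilde 0$, with $3^n x_n\to a$ and $3^n y_n\to b$, then
\[
\frac{|x_n-y_n|}{r_n}=|3^n x_n-3^n y_n|\longrightarrow |a-b|,
\]
so $\tilde x$ and $\tilde y$ are mutually stable with $\tilde d(\tilde x,\tilde y)=|a-b|$. Lemma~\ref{5:l2.6} then gives uniqueness of $\Omega^X_{0,\tilde r}$, and the unique family $\tilde X_{0,\tilde r}$ is precisely the set of $\tilde x$ with $\lim 3^n x_n$ existing.

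To identify the pretangent space with $C^e$, set $\phi(\tilde x):=\lim 3^n x_n$. Distance preservation is immediate from the displayed line, so $\phi$ descends to an isometric embedding of $\Omega^X_{0,\tilde r}$ into $C^e$. For surjectivity, given $c\in C^e$ the expansion~\eqref{eq3.4} yields $c = 3^{M+1}c_0$ with $c_0\in C$ and $M\in\mathbb{Z}$; then $x_n := 3^{M+1-n}c_0$ for $n\geq M+1$ (and $0$ otherwise) lies in $C$ (because $\tfrac{1}{3}C\subseteq C$) and satisfies $3^n x_n=c$ eventually, so $\phi(\tilde x)=c$.

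Finally, for tangency I would use Proposition~\ref{1:p1.5}(iv). Given any subsequence $\tilde r'=\{3^{-n_k}\}_{k\in\mathbb N}$ and $\tilde x\in\tilde X_{0,\tilde r'}$ with $3^{n_k}x_k\to c\in C^e$, extend $\tilde x$ to $\tilde y=\{y_n\}_{n\in\mathbb N}$ by setting $y_{n_k}:=x_k$ and $y_n:=3^{M+1-n}c_0$ for $n\notin\{n_k\}$ with $n\geq M+1$ (arbitrary on the finitely many remaining indices). Then $3^n y_n\to c$, so $\tilde y\in\tilde X_{0,\tilde r}$ and $\tilde y'=\tilde x$; thus $\mathrm{in}_{\tilde r'}$ is surjective. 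The main obstacle throughout is keeping all constructed sequences inside $C$, which reduces to the elementary fact $\tfrac13 C\subseteq C$, together with the careful bookkeeping for closedness of $C^e$.
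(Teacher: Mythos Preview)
Your argument is correct and follows essentially the same route as the paper's proof: identify $\tilde X_{0,\tilde r}$ with sequences whose rescaled limits exist, verify closedness of $C^e$, construct the isometry, and check tangency via Proposition~\ref{1:p1.5}. The only differences are cosmetic: the paper derives uniqueness by quoting Proposition~\ref{p:3.8} (inheriting it from $\mathbb{R}^+$) rather than invoking Lemma~\ref{5:l2.6} directly as you do, and the paper checks tangency via surjectivity of $em'$ (part~(iii) of Proposition~\ref{1:p1.5}) whereas you use surjectivity of $in_{\tilde r'}$ (part~(iv)); your explicit extension of $\tilde x$ to $\tilde y$ is in fact cleaner than the paper's somewhat informal appeal to the identification $f(\Omega_{0,\tilde r}^X)=C^e$.
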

\begin{proof}
Let $\tilde X_{0,\tilde r}$ be a maximal self-stable family for
which $p(\tilde X_{0,\tilde r})=\Omega^X_{0,\tilde r}$, see
diagram \eqref{1:eq1.4}. The uniqueness of $\tilde X_{0,\tilde r}$
and of $\Omega_{0,\tilde r}$ follows form Proposition \ref{p:3.8}.
As in the proof of Proposition \ref{4:p2.2} we see that for every
$\tilde x=\{x_n\}_{n\in\mathbb N}\in\tilde X_{0,\tilde r}$ there
exists a finite limit
\begin{equation}\label{eq3.5}
\lim_{n\to\infty}\frac{x_n}{3^{-n}}=c(\tilde x)
\end{equation}
and that the function $f:\Omega_{0,\tilde r}\to\mathbb R^+$ with
\begin{equation}\label{eq3.6}
f(\beta)=c(\tilde x)\quad\text{for}\quad \tilde
x\in\beta\in\Omega_{0,\tilde r}
\end{equation}
is well-defined and distance-preserving. Consequently
$\Omega_{0,\tilde r}^X$ is isometric to\linebreak
$(C^e,|\cdot,\cdot|)$ if the following two statements hold:

(i) $c(\tilde x)$ belongs to $C^e$ for every $\tilde x\in\tilde
X_{0,\tilde r}$;

(ii) For every $t\in C^e$ there is $\tilde x\in\tilde X_{0,\tilde
r}$ such that $c(\tilde x)=t$.

To prove Statement (i) note that
\begin{equation}\label{eq3.7}
C^e=\bigcup_{i=0}^\infty 3^iC
\end{equation}
and that for every $t>0$ we have the equality
\begin{equation}\label{eq3.8}
[0,t]\cap3^iC=[0,t]\cap3^jC
\end{equation}
if $i\wedge j\geq\log_3t$. Since $C$ is closed, equality
\eqref{eq3.8} and \eqref{eq3.7} imply that $C^e$ also is closed.
More over, using \eqref{eq3.2}--\eqref{eq3.4} we see that
$$
\frac{x_n}{3^{-n}}\in C^e
$$
for all $x_n\in C$ and all $n\in\mathbb N$. Hence $c(\tilde x)$
belongs to $C^e$ for every $\tilde x\in\tilde X_{0,\tilde r}$,
that is Statement (i) follows.

Let $t$ be an arbitrary point of $C^e$. Then $3^{-n}t\in C$ if
$n>M$, see \eqref{eq3.4}. Write
$$
x_n:=\begin{cases} 0&\text{if }n\leq M\\
3^{-n}t&\text{if }n>M
\end{cases}
$$
for $n\in\mathbb N$
and define $\tilde x:=\{x_n\}_{n\in\mathbb N}$. It is clear that
$c(\tilde x)=t$, so Statement (ii) is true.

It still remains to prove, that $\Omega_{0,\tilde r}^X$ is
tangent. Let $\{n_k\}_{k\in\mathbb N}$ be an infinite strictly
increasing sequence of natural numbers and let $\tilde
r':=\{3^{-n_k}\}_{k\in\mathbb N}$. As in the proof of Statement
(i) we see that the equivalence
$$
\big(\tilde x=\{x_k\}_{k\in\mathbb N}\in\tilde X_{0\tilde
r'}\big)\Leftrightarrow\Big(\lim_{k\to
\infty}\frac{x_k}{3^{-n_k}}\in C^e\Big)
$$
holds for every $\tilde x\in\tilde X$. By Statement (ii) we have
$f(\Omega_{0,\tilde r}^X)=C^e$ where $f$ is defined in
\eqref{eq3.6}. Consequently a function $em':\Omega_{0,\tilde
r}^X\to\Omega_{0,\tilde r'}$, see \eqref{1:eq1.4}, is surjective.
Hence, by Proposition \eqref{1:p1.5}, $\Omega_{0,\tilde r}$ is
tangent.
\end{proof}
Let for $x\in\mathbb R$
\begin{equation}\label{eq3.9}
\varphi_0(x):=\frac13x\quad\text{and}\quad\varphi_1(x)=\frac13x+\frac23.
\end{equation}
It is well known that the Cantor set is the unique nonempty
compact subset of $\mathbb R$ for which
\begin{equation}\label{eq3.10}
 X=\varphi_0( X)\cup\varphi_1(X).
\end{equation}
\begin{theorem}\label{t:3.10}
Let $X$ be the unique nonempty compact subset of $\mathbb R$ for
which equality \eqref{eq3.10} holds, let $m=0,1$, and $a_m$ be
fixed points and $k_m$ be ratios of similarities $\varphi_m$, see
\eqref{eq3.9}, and $\tilde r_m:=\{(k_m)^n\}_{n\in\mathbb N}$. Let
us define the sets $C_m$ and $C_m^e,\ m=0,1$ by the rules
$$
C_m=\{t-a_m:t\in C\},\quad C_m^e=\bigcup_{j\in\mathbb Z}(k_m)^jC_m
$$
where $C$ is the Cantor set. Then for $m=0,1$ the pretangent
spaces $\Omega_{a_m,\tilde r_m}^X$ is unique, tangent and
isometric to $(C_m^e,|\cdot,\cdot|)$.
\end{theorem}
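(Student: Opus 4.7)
The plan is to reduce the theorem to Proposition~\ref{p:3.9} via a natural self-isometry of the Cantor set. For $m=0$ one has $a_0=0$, $k_0=1/3$, $C_0=C$, and $C_0^e=\bigcup_{j\in\mathbb Z}3^{-j}C$ coincides with the set $C^e$ from Proposition~\ref{p:3.9}; this case is therefore already covered. So the real content lies in $m=1$, where $a_1=1$, $k_1=1/3$, $\tilde r_1=\{3^{-n}\}_{n\in\mathbb N}$ equals the normalizing sequence $\tilde r$ of Proposition~\ref{p:3.9}, $C_1=C-1$, and $C_1^e=\bigcup_{j\in\mathbb Z}3^jC_1$.

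My main tool will be the reflection $\sigma:\mathbb R\to\mathbb R$, $\sigma(t):=1-t$. First I would verify that $\sigma(C)=C$: any $x=\sum_{n\ge 1}a_n3^{-n}\in C$ with $a_n\in\{0,2\}$ satisfies $1-x=\sum_{n\ge 1}(2-a_n)3^{-n}$, again with digits in $\{0,2\}$, and the involution $\sigma\circ\sigma=\mathrm{id}$ then forces $\sigma(C)=C$. Since $\sigma(0)=1$ and $\sigma$ is an isometry of $\mathbb R$, and hence of $C$, onto itself, the pretangent space construction carries $\Omega^X_{0,\tilde r}$ onto $\Omega^X_{1,\tilde r}$: the componentwise map $\tilde x=\{x_n\}\mapsto\{\sigma(x_n)\}$ takes sequences from $C$ converging to $0$ bijectively to sequences from $C$ converging to $1$, preserves the ratios $|x_n-y_n|/r_n$, and hence takes any maximal self-stable family at $(X,0)$ to a maximal self-stable family at $(X,1)$. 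Passing to the metric identification, this induces an isometry of $\Omega^X_{0,\tilde r}$ onto $\Omega^X_{1,\tilde r}$ under which uniqueness and tangency are transferred.

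Finally I would identify the target isometry class. Proposition~\ref{p:3.9} gives $\Omega^X_{0,\tilde r}\simeq(C^e,|\cdot,\cdot|)$. From $1-C=C$ it follows that $C_1=C-1=-(1-C)=-C$, so
\begin{equation*}
C_1^e=\bigcup_{j\in\mathbb Z}3^jC_1=\bigcup_{j\in\mathbb Z}3^j(-C)=-C^e,
\end{equation*}
and the map $t\mapsto-t$ is an isometry of $(C^e,|\cdot,\cdot|)$ onto $(C_1^e,|\cdot,\cdot|)$. Composing the two isometries yields the required identification $\Omega^X_{1,\tilde r_1}\simeq(C_1^e,|\cdot,\cdot|)$, with uniqueness and tangency inherited from Proposition~\ref{p:3.9}.

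The only real obstacle is the routine but slightly technical verification that a self-isometry of the underlying space swapping two marked points induces an isometry of the corresponding pretangent spaces which preserves both uniqueness (via Lemma~\ref{5:l2.6}) and tangency (via Proposition~\ref{1:p1.5}). Once this naturality is in place, everything reduces to the ternary symmetry $a_n\leftrightarrow 2-a_n$ of $C$ and to the elementary computation $C_1=-C$.
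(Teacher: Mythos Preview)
Your proof is correct and follows essentially the same approach as the paper: reduce Theorem~\ref{t:3.10} to Proposition~\ref{p:3.9} by observing that $C_0,C_1,C$ (and $C_0^e,C_1^e,C^e$) are mutually isometric, that the normalizing sequences coincide, and that $a_0=0$, $a_1=1$. The paper states this in one sentence, whereas you supply the explicit isometry $\sigma(t)=1-t$ and spell out why it transports uniqueness and tangency of the pretangent space---details the paper leaves implicit.
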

\begin{proof}
The theorem follows from Proposition \ref{p:3.9} because
$C_0,C_1,C$ are isometric and $C_0^e,C_1^e,C^e$ are isometric and
$\{(k_0)^n\}_{n\in\mathbb N}=\{(k_1)^n\}_{n\in\mathbb
N}=\{3^{-n}\}_{n\in\mathbb N}$ and $a_0=0=a_1-1$.
\end{proof}
\begin{remark}\label{r:3.11}
Certainly, Theorem \ref{t:3.10} is, on the whole, a reformulation
of Proposition \ref{p:3.9} but in this form the result admits
generalizations for invariant sets
$$
K=f_0(K)\cup f_1(K)\cup\dots\cup f_n(K)
$$
of some other iterated function systems $(f_0,\dots,f_n)$.
\end{remark}
In all examples above pretangent spaces $\Omega_{a,\tilde r}^X$
were also tangent. The following example shows that there is a
metric space $X$ for which $\Omega_{a,\tilde r}^X$ is unique but
not tangent.
\begin{example}\label{e3.12}
Let $\tilde r=\{r_n\}_{n\in\mathbb N}$ be a sequence of strictly
decreasing positive real numbers $r_n$ with
\begin{equation}\label{eq3.10*}
\lim_{n\to\infty}\frac{r_n}{r_{n+1}}=\infty
\end{equation}
and such that $r_n>2r_{n+1}$ for all $n\in\mathbb N$. Let $X$ be a
union of two countable sets $\{r_n:n\in\mathbb N\}$ and
$\{2r_{2n}:n\in\mathbb N\}$
 and the one-point set $\{0\}$ ,
\begin{equation}\label{eq3.12}
X=\{r_n:n\in\mathbb N\}\cup\{2r_{2n}:n\in\mathbb N\}\cup\{0\}.
\end{equation}
Consider the metric space $(X,|\cdot,\cdot|)$. It is clear that
the sequences $\tilde 0$  and $\tilde x:=\{r_n\}_{n\in\mathbb N}$
are mutually stable w.r.t. $\tilde r$ and
$$
\tilde d_{\tilde r}(\tilde x,\tilde 0)=1.
$$
Let $\tilde X_{0,\tilde r}$ be a unique (by Proposition
\ref{p:3.8}) maximal self-stable family such that
$$
\tilde X_{0,\tilde r}\supseteq \{0,\tilde x\}.
$$
We claim that the pretangent space $\Omega_{0,\tilde r}^X$
corresponding to $\tilde X_{0,\tilde r}$ is two-point. Indeed,
suppose that $\tilde y=\{y_n\}_{n\in\mathbb N}\in\tilde
X_{0,\tilde r}$ and $\tilde d(\tilde y,\tilde 0)>0$. It is
sufficient to prove that the equality
\begin{equation}\label{eq3.13}
\tilde d(\tilde x,\tilde y)=0.
\end{equation}
holds. To this end, we note that \eqref{eq3.10*} and
\eqref{eq3.12} imply
\begin{equation}\label{eq3.14}
\frac{y_{2n+1}}{r_{2n+1}}=1\quad
\text{and}\quad\frac{y_{2n}}{r_{2n}}\in\{1,2\}
\end{equation}
for all sufficiently large $n\in\mathbb N$ because in the opposite
case
$$
\text{either }\lim_{n\to\infty}\frac{y_n}{r_n}=0\quad\text{or
}\lim_{n\to\infty}\frac{y_n}{r_n}=\infty.
$$
Since
$$
1=\lim_{n\to\infty}\frac{y_{2n+1}}{r_{2n+1}}=\lim_{n\to\infty}\frac{y_n}{r_n}=\lim_{n\to\infty}
\frac{y_{2n}}{r_{2n}},
$$
conditions \eqref{eq3.14} imply that
$$
y_{2n}=r_{2n}
$$
for sufficiently large $n$, Hence \eqref{eq3.13} follows.

Now let $\tilde r':=\{r_{2n}\}_{n\in\mathbb N}$ and $\tilde
X_{0,\tilde r'}$ be a maximal self-stable family such that
$$
\tilde X_{0,\tilde r'}\supseteq\{\tilde0,\tilde x,\tilde z\}
$$
where $\tilde x:=\{r_{2n}\}_{n\in\mathbb N}$ and $\tilde
z:=\{2r_{2n}\}_{n\in\mathbb N}$. Since
$$
1=\tilde d_{\tilde r'}(\tilde 0,\tilde x)=\frac12\tilde d_{\tilde
r'}(\tilde 0,\tilde z)=\tilde d(\tilde x,\tilde y),
$$
the pretangent space $\Omega_{0,\tilde r'}$ corresponding to
$\tilde X_{0,\tilde r'}$ contains at least three distinct points.
Consequently $\Omega_{0,\tilde r}$ is not tangent.
\end{example}

\bigskip{\bf Acknowledgement.} The initial version
of this paper was produced during the visit of the first author to
the Mersin University (TURKEY) in February-April 2008 under the
support of the T\"{U}B\.{I}TAK-Fellowships For Visiting Scientists
Programme.

\end{document}